\newcommand{\R}{\mathbb{R}}
\newcommand{\N}{\mathbb{N}}
\newcommand{\Q}{\mathbb{Q}}
\DeclareMathOperator{\osc}{osc}
\DeclareMathOperator{\diam}{diam}
\DeclareMathOperator{\dist}{dist}
\DeclareMathOperator{\lgth}{lh}
\DeclareMathOperator{\sht}{sh}
\newtheorem{theorem}{Theorem}[section]
\newtheorem{lemma}[theorem]{Lemma}
\newtheorem{corollary}[theorem]{Corollary}
\newtheorem{proposition}[theorem]{Proposition}
\newtheorem{claim}[theorem]{Claim}
\theoremstyle{definition}
\newtheorem{definition}[theorem]{Definition}
\newtheorem{notation}[theorem]{Notation}
\theoremstyle{remark}
\newtheorem*{remark}{Remark}
\title{Continuous functions on limits of F-decomposable systems}
\author{Todor Manev}
\date{}
\begin{document}

\maketitle

\begin{abstract}
	We introduce the concept of F-decomposable systems, well-ordered inverse systems of Hausdorff compacta with fully closed bonding mappings. A continuous mapping between Hausdorff compacta is called fully closed if the intersection of the images of any two closed disjoint subsets is finite. We give a characterization of such systems in terms of a property of the continuous functions on their limit. When, moreover, the fibers of neighboring bonding mappings are metrizable, we call the limit of such a system an $\mathrm{F}_d$-compact, a particular case of a Fedorchuk compact. The stated property allows us to obtain a locally uniformly rotund renorming on the space $C(K)$, where $K$ is an $\mathrm{F}_d$-compact of countable spectral height.
\end{abstract}

\setlength{\abovedisplayskip}{4pt}
\setlength{\belowdisplayskip}{4pt}

\section{Introduction}

In \cite{fed69} V. V. Fedorchuk introduced a technique for constructing topological spaces that proved effective in giving counterexamples to some statements in dimension theory. It has since been widely used in all branches of topology. The technique is referred to in the literature as a resolution. It consists of replacing points in some topological space by closed sets, using mappings on the original space to describe the resulting topology. In \cite{watson} Watson argues that the technique is natural and that it has been used in some form by every topologist, often unknowingly.

In the same paper of 1969 Fedorchuk defined a property of mappings between topological spaces that he called \emph{fully closed}. The canonical mapping from the resolution of a space onto the original space always satisfies this property. In fact, with some additional assumptions, namely if the fibers of a mapping $f: X \to Y$ are absolute neighborhood retracts, its full closedness is equivalent to $X$ being obtainable from $Y$ by a resolution. We shall now state the precise definition, but before that we need to fix some notation. For a mapping $f:M \to N$ between sets and a subset $A \subset M$, we will denote with $f^\#(A)$ the \textit{small image} of $A$, that is, $f^\#(A) := N \setminus f(M \setminus A)$.

\begin{definition} \label{fully_closed}
	Let $X$ and $Y$ be topological spaces and $f: X \to Y$ a continuous mapping. $f$ is called \textit{fully closed} at $y \in Y$ if for any finite open cover $\{U_1, ..., U_s\}$ of $f^{-1}(y)$ the set $\{y\} \cup \bigcup_{i = 1}^s f^\#\left(U_i\right)$ is a neighborhood of $y$. A continuous surjective mapping is called \textit{fully closed} if it is fully closed at every point of $Y$.
\end{definition}

In Fedorchuk's constructions the method of resolutions, or more generally the method of fully closed mappings, was used iteratively. More precisely, topological spaces were constructed as limits of inverse systems, where every subsequent level was obtained from the previous one by a resolution, or respectively every successive level mapped onto the previous by a fully closed mapping. This motivated Ivanov to define in \cite{ivanov1984fedorchuk} a class of compacta that he called \emph{Fedorchuk compacta} or \emph{F-compacta}. We give the precise definition bellow along with the definitions of several other important concepts. For more information on inverse systems and their limits we refer the reader to \cite[Chapter 2.5]{eng} or \cite{fedFC}.

\begin{definition} \label{F-system}
	Let $S := \{X_\alpha, \pi^\beta_\alpha, \alpha, \beta \in \gamma\}$ be a well-ordered continuous inverse system of Hausdorff compacta with $X_0$ a singleton. We say that $S$ is an \emph{F-system} if the neighboring bonding mappings, $\pi^{\alpha+1}_\alpha$ for $\alpha + 1 \in \gamma$, are fully closed. We say that $S$ is an \emph{F-decomposable system} if all bonding mappings, $\pi^\alpha_\beta$ for $\alpha, \beta \in \gamma$, are fully closed.
\end{definition}

\begin{definition}\label{F-comp}
	Let $S := \{X_\alpha, \pi^\beta_\alpha, \alpha, \beta \in \gamma\}$ be an F-system for which the fibers of all neighboring bonding mappings are metrizable. Then the limit $X := \lim\limits_{\leftarrow} S$ is called a \emph{Fedorchuk compact} (\emph{F-compact}) and the length of the system is called its spectral height (denoted $\sht(X)$), provided that $X$ cannot be obtained from a system satisfying the above conditions of strictly smaller length. If $S$ is moreover F-decomposable, we say that $X$ is an \emph{$\mathrm{F}_d$-compact}.
\end{definition}

Some classical topological spaces can be obtained from metrizable spaces by resolutions, thus making them F-compacta of spectral height 3. Examples include the double arrow space, the Alexandroff doubling of a metric space, the lexicographic square. More generally, the lexicographic cube $[0,1]^\alpha_{lex}$ is a Fedorchuk compact, as is the lexicographic product of arbitrary many copies of $\{0,1\}$. These examples are in fact $\mathrm{F}_d$-compacta. In \cite{manev24} it was shown that any scattered compact space can be obtained as the limit of an F-decomposable system with the fibers of neighboring bonding mappings homeomorphic to the one-point compactification of a discrete set.

Fedorchuk compacta and more generally F-systems have continued to produce counterexamples, including in Banach space theory (see e.g. \cite{av_kosz}, \cite{kosz-few_op}, \cite{mirna_pleb}). Recent interest in them has emerged in relation to the theory of renormings of Banach spaces. The present work is primarily devoted to such investigations. Recall that the norm of a Banach space $E$ is called \emph{locally uniformly rotund} (\emph{LUR} for short) if for any point $x$ in its unit sphere $S_E$ and any sequence $(x_n)_{n \in \N} \subset S_E$, we have the implication
\begin{equation*}
	\lim_{n \to \infty}\left\|\frac{x + x_n}{2}\right\| = 1 \quad \implies \quad \lim_{n \to \infty}\|x - x_n\| = 0.
\end{equation*}

The question for which compact spaces $K$ there exists an equivalent LUR norm on $C(K)$ has been studied extensively throughout the years. Notable classes for which the result is positive include Eberlein compacta, Valdivia compacta (see e.g. \cite[Section VII.7]{dgz}), compact spaces of functions on a Polish space with countably many discontinuities (\cite{hmo-count.disc}). The latter result gives, in particalur, a LUR renorming on $C(K)$, where $K$ is the Helly compact. In \cite{hay-rog_scattered} a positive result was obtained for any scattered compact of countable height. For the case of $K$ the one-point compactification of a tree, LUR renormability and many other renorming properties of $C(K)$ were characterized by Haydon in \cite{haydon_trees}. In \cite{hjnr00} it was shown that $C([0,1]^\alpha_{lex})$ is LUR renormable if and only if $\alpha$ is a countable ordinal. Since the lexicographic cube is an $\mathrm{F}_d$-compact, the question arises whether this result is extendable to the whole class, or even to all F-compacta of countable spectral height.

The first result in this direction was given in \cite{gist} where it was proven that $C(K)$ admits an equivalent LUR norm whenever $K$ is a Fedorchuk compact of spectral height $3$. This was then extended in \cite{manev24} to F-compacta of finite spectral height. In Section \ref{sect_proof} we prove that an equivalent pointwise-lower semicontinuous LUR norm can be constructed on $C(K)$ whenever $K$ is an $\mathrm{F}_d$-compact with $\sht(K) < \omega_1$. An important step in the proof is a technical lemma (Lemma \ref{finite_branching}) in Section \ref{characterization} that gives us a rigidity condition to obtain the renorming. This lemma also allows us to give a characterization of F-decomposable systems in terms of a relation between continuous functions on their limit and continuous functions on an associated tree.

\section{Preliminaries}

We start this section with a well-known fact that will later be useful for evaluating the distances between some continuous functions.

\begin{proposition}[see e.g. {\cite[p. 173]{holmes_opt}}] \label{metr_proj}
	Let $X$ and $Y$ be Hausdorff compact spaces and $\varphi: X \to Y$ a continuous surjection. Let $\varphi^0$ denote the natural embedding of $C(Y)$ into $C(X)$, that is, $(\varphi^0 g)(x) := g(\varphi(x))$, for $g \in C(Y)$. Then, whenever $f \in C(X)$, there exists a function $g \in C(Y)$ such that $\|f- \varphi^0 g\| = \dist(f, \varphi^0 C(Y)) = \frac{1}{2} \sup \{\osc_{\varphi^{-1}(y)} f: y \in Y\}$.
\end{proposition}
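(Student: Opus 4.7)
My plan is to recast the problem on $Y$ via two semicontinuous envelopes of $f$ and then invoke an insertion theorem. For each $y \in Y$, set $M(y) := \max_{x \in \varphi^{-1}(y)} f(x)$ and $m(y) := \min_{x \in \varphi^{-1}(y)} f(x)$; these are finite because $\varphi^{-1}(y)$ is compact, and $\osc_{\varphi^{-1}(y)} f = M(y) - m(y)$. To check that $M$ is upper semicontinuous, I would note that $\{y : M(y) < r\} = Y \setminus \varphi(f^{-1}[r,\infty))$, which is open since $\varphi$ sends closed (hence compact) sets to closed sets; the analogous computation shows $m$ is lower semicontinuous. Write $c := \tfrac{1}{2} \sup_{y \in Y}(M(y) - m(y))$, finite since $f$ is bounded.

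For the lower bound on the distance, I would fix an arbitrary $g \in C(Y)$ and an arbitrary $y \in Y$, and use that $\varphi^0 g$ is constantly $g(y)$ on the fibre $\varphi^{-1}(y)$:
\[
\|f - \varphi^0 g\| \;\geq\; \sup_{x \in \varphi^{-1}(y)} |f(x) - g(y)| \;\geq\; \max\bigl\{M(y) - g(y),\, g(y) - m(y)\bigr\} \;\geq\; \tfrac{1}{2}\bigl(M(y) - m(y)\bigr),
\]
the last inequality since the maximum of two reals dominates their average. Taking the supremum over $y \in Y$ gives $\dist(f, \varphi^0 C(Y)) \geq c$.

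For the matching upper bound, the idea is to produce $g \in C(Y)$ satisfying $M(y) - c \leq g(y) \leq m(y) + c$ for every $y \in Y$, since any such $g$ then automatically yields $|f(x) - g(\varphi(x))| \leq c$ at every $x \in X$, i.e.\ $\|f - \varphi^0 g\| \leq c$. The required inequality $M - c \leq m + c$ holds globally by the very definition of $c$. Because $M - c$ is upper semicontinuous, $m + c$ is lower semicontinuous, and $Y$ is normal (being compact Hausdorff), the Katětov--Tong insertion theorem supplies a continuous $g$ sandwiched between them, which finishes the proof.

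The only step requiring real care is the semicontinuity of $M$ and $m$; it rests crucially on $\varphi$ being a closed map, which in turn uses compactness of $X$. Beyond that, the argument is a direct two-sided estimate coupled with the insertion theorem, and I do not expect any genuine obstacle.
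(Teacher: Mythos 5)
Your proof is correct and follows essentially the same route as the paper: both form the upper and lower semicontinuous fibre-envelopes shifted by $c=\tfrac12\sup_y \osc_{\varphi^{-1}(y)}f$ and insert a continuous function between them, the only difference being that you invoke the Kat\v{e}tov--Tong insertion theorem where the paper phrases the same step as a Michael continuous selection for the interval-valued map $y\mapsto[\Phi(y),\Psi(y)]$ --- interchangeable tools in this setting. If anything, your write-up is slightly more complete, since you also verify the lower bound $\dist(f,\varphi^0 C(Y))\geq c$, which the paper's proof leaves implicit.
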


\begin{proof}
	Denote $\alpha := \sup \{\osc_{\varphi^{-1}(y)} f: y \in Y\}$ and consider the functions $\Phi, \Psi: Y \to \R$ given by 
	\begin{align*}
		&\Phi(y) := \max\{f(x): x \in \varphi^{-1}(y)\} - \frac{\alpha}{2};\\
		&\Psi(y) := \min\{f(x): x \in \varphi^{-1}(y)\} + \frac{\alpha}{2}.
	\end{align*}
	Then $\Phi$ is upper semicontinuous, $\Psi$ is lower semicontinuous, and $\Phi(y) \leq \Psi(y)$ for any $y \in Y$. Thus the set-valued mapping $F : Y \rightrightarrows \R$, given by $F(y) := [\Phi(y), \Psi(y)]$, has nonempty, closed, convex values and is lower semicontinuous. By the Michael selection theorem, we find a continuous selection $g$ of $F$. We immediately have $f(x) - g(\varphi(x)) \leq \frac{\alpha}{2}$ for any $x \in X$, as desired.
\end{proof}

\begin{remark}
	Throughout what follows, in order to simplify notation, whenever we have a continuous surjection $\varphi: X \to Y$ between Hausdorff compacta, we will identify $C(Y)$ with its natural embedding, that is, we consider $C(Y)$ as a subspace of $C(X)$.
\end{remark}

The notation that we now introduce will be used extensively throughout the article.

\begin{notation} \label{quot}
	Let $X$ and $Y$ be topological spaces and $f$ a continuous mapping from $X$ onto $Y$. If $M \subset Y$, by $Y_f^M$ we will denote the quotient space corresponding to the following equivalence classes:
	\begin{equation*}
		[x] \quad = \quad \left\{\begin{aligned} \quad
			&x, \qquad &x \in f^{-1}(M);\\
			&f^{-1}\left(f(x)\right), \qquad &f(x) \in Y \setminus M.
		\end{aligned}\right.
	\end{equation*}
	We will denote the corresponding quotient mapping from $X$ to $Y_f^M$ by $p_f^M$, or by $f^M$ when appropriate, and by $\pi_f^M: Y_f^M \to Y$ the unique mapping such that $f = \pi_f^M \circ p_f^M$.
	If $M = \{y\}$, $y \in Y$, we will simply write $Y^y$. We may occasionally add a subscript ($Y_f^M$) if the mapping generating the equivalence relation is not clear from the context.
\end{notation}

\begin{center}
	\begin{tikzcd}
		X \arrow[rr, "f"] \arrow[rd, "p_f^M"] && Y \\
		& Y^M \arrow[ru, "\pi_f^M"]
	\end{tikzcd}
\end{center}

Many equivalent formulations of the property of full closedness have been established throughout the years. Different definitions are more convenient depending on the context. Bellow we state the ones that will be relevant for what follows.

\begin{proposition} [{\cite[Theorem II.1.6]{fedFC}}] \label{eqDef}
	Let $X$ and $Y$ be Hausdorff compact spaces and $f: X \to Y$ a continuous onto mapping. Then the following are equivalent:
	\begin{enumerate}[(1)]
		\item $f$ is fully closed. \label{def1}
		\item If $F_1, F_2 \subset X$ are closed and disjoint, the set $f(F_1) \cap f(F_2)$ is finite. \label{def4}
		\item If $U \subset X$ is open and $y \in Y$, the set $U^y := \left(f^{-1}(y) \cap U\right) \cup f^{-1}\left(f^\#(U)\right)$ is open. \label{def5}
		\item For any $M \subset Y$, the space $Y^M$ is Hausdorff. \label{def6}
	\end{enumerate}
\end{proposition}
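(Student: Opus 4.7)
The plan is to establish the cycle $(1) \Rightarrow (2) \Rightarrow (3) \Rightarrow (4) \Rightarrow (1)$. For $(1) \Rightarrow (2)$, I take disjoint closed $F_1, F_2 \subset X$ and any $y \in f(F_1) \cap f(F_2)$. The pair $\{X \setminus F_1, X \setminus F_2\}$ is an open cover of $f^{-1}(y)$, and since $f^\#(X \setminus F_i) = Y \setminus f(F_i)$, condition $(1)$ says that $\{y\} \cup (Y \setminus f(F_1)) \cup (Y \setminus f(F_2)) = Y \setminus \bigl((f(F_1) \cap f(F_2)) \setminus \{y\}\bigr)$ is a neighborhood of $y$, i.e., $y$ is an isolated point of the closed set $f(F_1) \cap f(F_2)$. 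Since an infinite closed subset of a compact space necessarily contains one of its accumulation points, this intersection must be finite.

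For $(2) \Rightarrow (3)$, I verify openness of $U^y$ at each of its points. Since $f$ is closed, $f^\#(U) = Y \setminus f(X \setminus U)$ is open in $Y$, so $f^{-1}(f^\#(U))$ already supplies an open neighborhood of each point it contains. For $x \in f^{-1}(y) \cap U$, normality yields an open $W_0 \ni x$ with $\overline{W_0} \subset U$; then $(2)$ applied to $\overline{W_0}$ and $X \setminus U$ produces a finite set $f(\overline{W_0}) \cap f(X \setminus U) = \{y_1, \ldots, y_k\}$. Shrinking $W_0$ to avoid the finitely many closed fibers $f^{-1}(y_i)$ with $y_i \neq y$ yields an open $W \ni x$ with $f(W) \cap f(X \setminus U) \subset \{y\}$, which forces $W \subset U^y$.

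For $(3) \Rightarrow (4)$, I separate two distinct equivalence classes in $Y^M$. If they project under $\pi_f^M$ to different points of $Y$, the preimages of disjoint open neighborhoods from $Y$ are saturated disjoint open sets. The only non-routine case is two singleton classes $\{x_1\}, \{x_2\} \subset f^{-1}(M)$ sharing a common image $f(x_1) = f(x_2) = m \in M$: choose disjoint open $U_1, U_2 \subset X$ separating $x_1, x_2$ and consider $U_1^m, U_2^m$. These are open by $(3)$, disjoint because $U_i^m \subset U_i$, and saturated for $\sim_M$ since every equivalence class meeting $U_i^m$ is either a singleton from $f^{-1}(M) \cap U_i$ or a whole fiber $f^{-1}(y') \subset U_i$ corresponding to some $y' \in f^\#(U_i) \setminus M$.

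The most delicate step is $(4) \Rightarrow (1)$, where only Hausdorffness of the quotients is at hand. Given $y \in Y$ and a finite open cover $\{U_1, \ldots, U_s\}$ of $f^{-1}(y)$, I work inside the compact Hausdorff space $Y^{\{y\}}$ via $p = p_f^{\{y\}}$ and its factor $\pi = \pi_f^{\{y\}}$. For each $x \in f^{-1}(y)$ I pick $i(x)$ with $x \in U_{i(x)}$; because $p$ is closed, $p(X \setminus U_{i(x)})$ is a closed set not containing the singleton class $\{x\}$, so regularity furnishes an open neighborhood $W_x$ of $\{x\}$ in $Y^{\{y\}}$ with $p^{-1}(W_x) \subset U_{i(x)}$. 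The family $\{W_x\}_{x \in f^{-1}(y)}$ covers the compact set $\pi^{-1}(y) = p(f^{-1}(y))$, so a finite subcollection has union $W \supset \pi^{-1}(y)$; closedness of $\pi$ then yields an open $W' \ni y$ in $Y$ with $\pi^{-1}(W') \subset W$. For any $y' \in W' \setminus \{y\}$ the class $\pi^{-1}(y')$ is exactly the whole fiber $f^{-1}(y')$, which therefore lies inside some $W_{x_j}$ and hence inside $U_{i(x_j)}$; this is precisely $y' \in f^\#(U_{i(x_j)})$. Thus $W' \subset \{y\} \cup \bigcup_i f^\#(U_i)$, which is $(1)$.
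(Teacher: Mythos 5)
The paper does not prove this proposition at all: it is quoted verbatim from Fedorchuk's survey (\cite[Theorem~II.1.6]{fedFC}), so there is no in-paper argument to compare against. Your blind proof is a correct and complete self-contained derivation, organized as the single cycle $(1)\Rightarrow(2)\Rightarrow(3)\Rightarrow(4)\Rightarrow(1)$. Each step checks out: in $(1)\Rightarrow(2)$ the cover $\{X\setminus F_1, X\setminus F_2\}$ and the identity $f^\#(X\setminus F_i)=Y\setminus f(F_i)$ do show every point of the compact set $f(F_1)\cap f(F_2)$ is isolated in it, hence the set is finite; in $(2)\Rightarrow(3)$ the shrinking of $W_0$ away from the finitely many exceptional fibers correctly yields $f(W)\cap f(X\setminus U)\subset\{y\}$, which is equivalent to $W\subset U^y$; in $(3)\Rightarrow(4)$ the inclusion $U_i^m\subset U_i$ and the saturation of $U_i^m$ are both verified correctly (a fiber over $y'\notin M$ meeting $U_i^m$ must meet $f^{-1}(f^\#(U_i))$ and is therefore wholly contained in it); and in $(4)\Rightarrow(1)$ the passage through the compact Hausdorff quotient $Y^{\{y\}}$, using closedness of $p$ and $\pi$ and the identification of $\pi^{-1}(y')$ with the collapsed fiber $f^{-1}(y')$ for $y'\neq y$, delivers exactly the neighborhood required by Definition~\ref{fully_closed}. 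The only cosmetic remark is that in the last step regularity is not really needed: $W_x:=Y^{\{y\}}\setminus p(X\setminus U_{i(x)})$ already works as the open neighborhood with $p^{-1}(W_x)\subset U_{i(x)}$.
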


From \ref{def4} it is obvious that if the composition of two continuous mappings $g \circ f$ is fully closed, then its left divisor $g$ is also fully closed. In general this is not the case for the right divisor $f$. However, the following proposition gives us a sufficient condition for the full closedness of this mapping at some point. This will be particularly useful in Section \ref{constr}.

\begin{proposition}[{\cite[II.1.9]{fedFC}}] \label{comp_fc}
	Let $f: X \to Y$ and $g: Y \to Z$ be continuous mappings between Hausdorff compact spaces. Suppose $g \circ f$ is fully closed at a point $z \in Z$ and let $y \in g^{-1}(z)$. Then if $f$ or $g$ is one-to-one at $y$, the mapping $f$ is fully closed at $y$.
\end{proposition}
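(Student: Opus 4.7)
The plan is to split according to which of the two one-to-one hypotheses holds. Case (i): $f$ is one-to-one at $y$, meaning $f^{-1}(y)$ is a single point $\{x\}$. This case should turn out to be essentially automatic and not even require the hypothesis on $g \circ f$: if $\{U_1, \ldots, U_s\}$ is any finite open cover of $\{x\}$, then $x \in U_i$ for some $i$, say $U_1$, so $f^{-1}(y) \subset U_1$, and hence $y \in f^\#(U_1)$. Since $f$ is closed (being continuous from a compact space to a Hausdorff one), $f^\#(U_1) = Y \setminus f(X \setminus U_1)$ is an open neighborhood of $y$ contained in $\{y\} \cup \bigcup_{i=1}^s f^\#(U_i)$, which is therefore a neighborhood of $y$.

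Case (ii): $g$ is one-to-one at $y$, so $g^{-1}(z) = \{y\}$. The key observation is that $(g \circ f)^{-1}(z) = f^{-1}\bigl(g^{-1}(z)\bigr) = f^{-1}(y)$, so any finite open cover $\{U_1, \ldots, U_s\}$ of $f^{-1}(y)$ is simultaneously a finite open cover of $(g \circ f)^{-1}(z)$. Applying the hypothesis that $g \circ f$ is fully closed at $z$, I get an open neighborhood $V_z$ of $z$ contained in $\{z\} \cup \bigcup_i (g \circ f)^\#(U_i)$. Since $g(y) = z$, the preimage $g^{-1}(V_z)$ is an open neighborhood of $y$, and using $g^{-1}(z) = \{y\}$ it sits inside $\{y\} \cup \bigcup_i g^{-1}\bigl((g \circ f)^\#(U_i)\bigr)$.

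The step that demands attention---the only real computation in the proof---is the set-theoretic inclusion $g^{-1}\bigl((g \circ f)^\#(U)\bigr) \subset f^\#(U)$ for any open $U \subset X$. Unwinding the definition of the small image, $y' \in g^{-1}\bigl((g \circ f)^\#(U)\bigr)$ means $(g \circ f)^{-1}(g(y')) \subset U$, and since $f^{-1}(y') \subset f^{-1}\bigl(g^{-1}(g(y'))\bigr) = (g \circ f)^{-1}(g(y'))$, this forces $f^{-1}(y') \subset U$, i.e., $y' \in f^\#(U)$. Combining this inclusion with the previous paragraph gives $g^{-1}(V_z) \subset \{y\} \cup \bigcup_i f^\#(U_i)$, so the latter is a neighborhood of $y$, and $f$ is fully closed at $y$.
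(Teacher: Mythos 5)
The paper states this proposition as a quoted result from Fedorchuk (\cite[II.1.9]{fedFC}) and gives no proof of its own, so there is nothing internal to compare against; your argument stands on its own and it is correct. Both cases check out: when $f^{-1}(y)$ is a singleton the claim is indeed automatic because $f^{\#}(U) = Y \setminus f(X \setminus U)$ is open for any open $U$ ($f$ being a closed map between compact Hausdorff spaces), and in the case $g^{-1}(z)=\{y\}$ the identity $(g\circ f)^{-1}(z)=f^{-1}(y)$ together with the inclusion $g^{-1}\bigl((g\circ f)^{\#}(U)\bigr)\subset f^{\#}(U)$ — which you verify correctly by unwinding the definition of the small image — transports the neighborhood of $z$ supplied by full closedness of $g\circ f$ down to the required neighborhood of $y$. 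This is the standard argument for this lemma, and it is complete.
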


\begin{corollary}[{\cite[II.1.10]{fedFC}}] \label{quot_fc}
	Let $f: X \to Y$ be a fully closed surjection between Hausdorff compact spaces and let $M \subset Y$. The quotient mapping $p_f^M: X \to Y^M_f$ is also fully closed.
\end{corollary}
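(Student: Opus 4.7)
The plan is to apply Proposition \ref{comp_fc} pointwise to the factorization $f = \pi_f^M \circ p_f^M$ given in Notation \ref{quot}. First I would note that $Y^M_f$ is a Hausdorff compactum: compactness is automatic as a continuous image of the compactum $X$, while Hausdorffness is provided by implication \ref{def6} of Proposition \ref{eqDef}, which applies because $f$ is fully closed. Hence $p_f^M$ and $\pi_f^M$ are continuous surjections between Hausdorff compacta, and both are candidates for the hypotheses of Proposition \ref{comp_fc}.

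Next, I would fix an arbitrary $[x] \in Y^M_f$ and set $z := \pi_f^M([x]) = f(x)$. The composition $\pi_f^M \circ p_f^M = f$ is fully closed at $z$ by assumption, so by Proposition \ref{comp_fc} (with the roles $p_f^M$ as the right factor and $\pi_f^M$ as the left factor) it suffices to show that at the point $[x]$ either $p_f^M$ or $\pi_f^M$ is one-to-one. I would split into the two cases dictated by the definition of the equivalence classes in Notation \ref{quot}. If $f(x) \in M$, then $[x] = \{x\}$, so $(p_f^M)^{-1}([x]) = \{x\}$ and $p_f^M$ is one-to-one at $[x]$. If $f(x) \in Y \setminus M$, then $[x] = f^{-1}(f(x))$ is, by construction, the only equivalence class lying above $z$, giving $(\pi_f^M)^{-1}(z) = \{[x]\}$ and so $\pi_f^M$ is one-to-one at $[x]$. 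In either case Proposition \ref{comp_fc} yields full closedness of $p_f^M$ at $[x]$, and since $[x]$ was arbitrary, $p_f^M$ is fully closed on all of $Y^M_f$.

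There is no genuine obstacle here once Proposition \ref{comp_fc} is in hand: the argument reduces to the dichotomy "either the class collapses to the point or the point has a unique class above it," each horn supplying the required injectivity on the appropriate side of the factorization. The only care needed is to verify that Proposition \ref{comp_fc} is legally applicable, which is exactly why the initial Hausdorffness check via Proposition \ref{eqDef}\ref{def6} is indispensable.
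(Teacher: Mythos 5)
Your argument is correct and is exactly the derivation the paper intends: the statement is presented as a corollary of Proposition \ref{comp_fc} (cited from Fedorchuk, without an explicit proof in the text), and applying that proposition to the factorization $f = \pi_f^M \circ p_f^M$ with the case split on whether $f(x) \in M$ is the standard route. Your preliminary check that $Y^M_f$ is a Hausdorff compactum via Proposition \ref{eqDef}\eqref{def6} is the right (and necessary) justification for invoking Proposition \ref{comp_fc}.
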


The proposition that follows gives us metrizability conditions for the domain space of a fully closed mapping. It will be important in Section \ref{sect_proof}.

\begin{proposition} [{\cite[II.3.10]{fedFC}}] \label{Fed_metr}
	Let $f: X \to Y$ be a fully closed mapping between Hausdorff compacta. Then $X$ is metrizable if and only if the following conditions hold:
	\begin{itemize}[noitemsep]
		\item $Y$ is metrizable;
		\item All the fibers $f^{-1}(y)$ are metrizable;
		\item The set of nontrivial fibers $\left\{y \in Y: \left|f^{-1}(y)\right| \geq 2\right\}$ is countable.
	\end{itemize}
\end{proposition}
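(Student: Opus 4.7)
My plan is to handle the two implications separately; full closedness is used only in the forward direction.

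For $(\Rightarrow)$, assume $X$ is metrizable. That $Y$ is metrizable is standard: given a countable base $\{B_m\}_{m \in \N}$ of $X$, the small images $f^\#\bigl(\bigcup_{i \leq k} B_{m_i}\bigr)$ of finite unions form a countable base of $Y$ --- they are open because $f$ is closed, and for any open $V \ni y$, compactness of $f^{-1}(y) \subset f^{-1}(V)$ supplies finitely many $B_{m_i}$ whose union sits inside $f^{-1}(V)$. That each fiber is metrizable is immediate, being a closed subspace of a compact metric space. For the countability of $N := \{y \in Y : |f^{-1}(y)| \geq 2\}$, I would fix a countable base $\mathcal{B}$ of $X$ and, for each pair $(B, B') \in \mathcal{B}^2$ with $\overline{B} \cap \overline{B'} = \emptyset$, invoke property (\ref{def4}) of Proposition~\ref{eqDef} to conclude that $f(\overline{B}) \cap f(\overline{B'})$ is finite. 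The union over these countably many pairs is countable; it contains $N$, because any $y \in N$ admits distinct $a, b \in f^{-1}(y)$, and the normality of $X$ provides disjoint open neighborhoods of $a$ and $b$ with disjoint closures, which can be shrunk to basic elements $B \ni a$ and $B' \ni b$ with $\overline{B} \cap \overline{B'} = \emptyset$, placing $y$ in $f(\overline{B}) \cap f(\overline{B'})$.

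For $(\Leftarrow)$, full closedness is not needed. Enumerate the non-trivial fibers as $\{f^{-1}(y_n) : n \in \N\}$. Each is compact metric, hence embeds via some $h_n$ into the Hilbert cube $[0,1]^{\N}$. Since $X$ is normal and $f^{-1}(y_n)$ is closed, Tietze's extension theorem applied coordinate-wise produces a continuous extension $g_n : X \to [0,1]^{\N}$. Consider the diagonal map
\[
F : X \to Y \times \prod_{n \in \N} [0,1]^{\N}, \qquad F(x) := \bigl( f(x),\, (g_n(x))_{n \in \N} \bigr).
\]
It is continuous and injective: points with distinct $f$-image are separated by the first coordinate, while two distinct points of a non-trivial fiber $f^{-1}(y_n)$ are separated by $g_n$, whose restriction $h_n$ is injective. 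The codomain is a countable product of compact metric spaces, hence compact metric. A continuous injection from a compact space into a Hausdorff space is automatically a topological embedding, so $X$ sits inside a metric space and is itself metrizable.

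The main obstacle is the countability of non-trivial fibers in $(\Rightarrow)$: full closedness, through the disjoint-closed-set formulation (\ref{def4}) of Proposition~\ref{eqDef}, is exactly the rigidity that controls $N$ via countably many disjoint pairs of base elements. The remaining ingredients --- small images for the metrizability of $Y$, Tietze's extension theorem, and the compact-to-Hausdorff injection criterion --- are routine.
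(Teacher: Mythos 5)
The paper does not prove this proposition at all --- it is quoted verbatim from Fedorchuk (\cite[II.3.10]{fedFC}) as a known tool --- so there is no in-paper argument to compare against. Your proof is correct and complete as a standalone argument. Both implications check out: in the forward direction, metrizability of $Y$ and of the fibers is routine (one could even shortcut the first item by noting that $Y$ is a continuous Hausdorff image of a compact metrizable space), and the countability of the set of non-trivial fibers is exactly where full closedness enters, via formulation (\ref{def4}) of Proposition \ref{eqDef} applied to countably many pairs of basic open sets with disjoint closures. In the reverse direction, the diagonal embedding of $X$ into $Y \times \prod_{n}[0,1]^{\N}$ via Tietze extensions of embeddings of the countably many non-trivial fibers is sound, and your observation that full closedness is not needed for this implication is accurate (the hypothesis is a standing assumption in Fedorchuk's statement, but sufficiency of the three conditions holds for any continuous surjection between Hausdorff compacta). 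The one point worth making explicit is that $y\in f^{\#}(U)$ iff $f^{-1}(y)\subset U$ and that $f^{\#}(U)\subset V$ when $U \subset f^{-1}(V)$ requires surjectivity of $f$, which is part of the paper's definition of a fully closed mapping, so no gap arises.
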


This next proposition first appeared in \cite{gist} and is key to the results there, in \cite{manev24}, and in the present work. It gives another characterization of full closedness, this time in terms of the behavior of continuous functions over the domain space.

\begin{proposition}[\cite{gist}, see also \cite{manev24}]\label{c_0.osc}
	Let $\pi: X \to Y$ be a continuous surjective mapping between Hausdorff compacta. Then $\pi$ is fully closed if and only if for all $f \in C(X)$ we have that $\left(\osc_{\pi^{-1}(y)}f\right)_{y \in Y} \in c_0(Y)$.
\end{proposition}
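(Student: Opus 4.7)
The plan is to prove both directions using the equivalent characterization of full closedness from Proposition \ref{eqDef}(\ref{def4}): $\pi$ is fully closed iff $\pi(F_1)\cap\pi(F_2)$ is finite for any disjoint closed $F_1, F_2 \subset X$. Recall that $(a_y)_{y\in Y}\in c_0(Y)$ means that for every $\varepsilon>0$ the set $\{y\in Y:|a_y|\geq \varepsilon\}$ is finite.

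For the easy direction ($c_0$-oscillation $\Rightarrow$ fully closed), I would fix disjoint closed sets $F_1,F_2\subset X$ and, using normality of $X$, apply Urysohn's lemma to produce a function $f\in C(X)$ with $f|_{F_1}\equiv 0$ and $f|_{F_2}\equiv 1$. Any point $y\in\pi(F_1)\cap\pi(F_2)$ then has two preimages $x_i\in F_i\cap\pi^{-1}(y)$ with $|f(x_1)-f(x_2)|=1$, so $\osc_{\pi^{-1}(y)}f\geq 1$. By hypothesis only finitely many $y$ satisfy this, so the intersection is finite and Proposition \ref{eqDef}(\ref{def4}) finishes this direction.

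For the converse, fix $f\in C(X)$ and $\varepsilon>0$. Since $f(X)$ is a compact subset of $\R$, I would cover it by finitely many open intervals $I_1,\dots,I_n$ each of length strictly less than $\varepsilon/3$, and set $F_i:=f^{-1}(\overline{I_i})$. For any pair $(i,j)$ with $\overline{I_i}\cap\overline{I_j}=\emptyset$, the sets $F_i$ and $F_j$ are disjoint closed subsets of $X$, so $\pi(F_i)\cap\pi(F_j)$ is finite by the assumed full closedness. The key observation is that if $\osc_{\pi^{-1}(y)}f\geq\varepsilon$, then one can pick $x_1,x_2\in\pi^{-1}(y)$ with $|f(x_1)-f(x_2)|\geq\varepsilon$ and indices $i,j$ with $f(x_k)\in I_{i_k}$; the triangle inequality, combined with each interval having length less than $\varepsilon/3$, forces $\overline{I_i}$ and $\overline{I_j}$ to be at positive distance, hence disjoint. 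Thus $\{y:\osc_{\pi^{-1}(y)}f\geq\varepsilon\}$ is contained in the finite union $\bigcup \pi(F_i)\cap\pi(F_j)$ (over the finitely many admissible index pairs), which is finite.

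Neither direction seems to contain a serious obstacle; the one bookkeeping step that requires slight care is choosing the mesh of the cover in the converse direction small enough (I used $\varepsilon/3$) so that an oscillation $\geq\varepsilon$ automatically produces intervals whose closures are disjoint. With that choice, the forward estimate reduces to the standard characterization of full closedness via intersections of images of disjoint closed sets.
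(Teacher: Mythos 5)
Your proof is correct. Note that the paper does not prove Proposition \ref{c_0.osc} itself but cites it from \cite{gist} and \cite{manev24}; your argument is the natural one, reducing both directions to the characterization of full closedness in Proposition \ref{eqDef}(\ref{def4}). Both halves check out: the Urysohn function argument for sufficiency is standard, and in the converse the step ``pick $x_1,x_2\in\pi^{-1}(y)$ with $|f(x_1)-f(x_2)|\geq\varepsilon$'' is justified because the fiber is compact, so the oscillation is attained; the mesh $\varepsilon/3$ then indeed forces the two closed intervals to be disjoint, and the containment of $\{y:\osc_{\pi^{-1}(y)}f\geq\varepsilon\}$ in a finite union of finite sets $\pi(F_i)\cap\pi(F_j)$ completes the proof.
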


We now provide the main result on which our technique for obtaining a LUR renorming of the space of continuous functions on an $\mathrm{F}_d$-compact will be based. This characterization of Molt\`o, Orihuela and Troyanski from 1997 is the foundation of the nonlinear transfer technique developed in \cite{motv}.

\begin{theorem}[\cite{mot97}]\label{LUR_char}
	Let $E$ be a Banach space and $F$ a norming subspace of its dual. Then $E$ admits an equivalent $\sigma(E,F)$-lower semicontinuous LUR norm if and only if for any $\epsilon >0$ there exists a countable decomposition $E = \bigcup_{n \in \N} E_n$ such that for all $n \in \N$ and $x \in E_n$ there exists a $\sigma(E,F)$-open halfspace $H$ containing $x$ and satisfying:
	\begin{equation*}
		\|\cdot\| \text{-} \diam \left(E_n \cap H \right) < \epsilon.
	\end{equation*}
\end{theorem}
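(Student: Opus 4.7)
The statement is a characterization, so the plan is to handle the two implications separately.

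For the necessity direction, I would suppose $E$ already carries an equivalent $\sigma(E,F)$-lsc LUR norm, which I denote by $\rho$; observe that $F$ remains norming for the dual norm $\rho^*$ because $\rho$ is $\sigma(E,F)$-lsc. Given $\epsilon > 0$, for each pair $(n,m) \in \N^2$ I would define $E_{n,m}$ to be the set of $x \in E$ with $\rho(x) \leq n$ for which some $f \in F$ satisfies $\rho^*(f) \leq 1$, $f(x) > \rho(x) - 1/m$, and $\rho\text{-}\diam\{y \in E : \rho(y) \leq n,\ f(y) > \rho(x) - 1/m\} < \epsilon$. The LUR condition on $\rho$ together with $F$ being norming shows every $x$ lies in some $E_{n,m}$, and the open halfspace $\{y : f(y) > \rho(x) - 1/m\}$ is the required $\sigma(E,F)$-open halfspace (rescaling $\epsilon$ by the equivalence constants recovers the analogous bound for the original norm).

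For the converse, suppose the decomposition property holds and, for each $k \in \N$, fix a decomposition $E = \bigcup_n E_{n,k}$ with slicing constant $1/k$. For each pair $(n,k)$ I would let $C_{n,k}$ be the $\sigma(E,F)$-closed absolutely convex hull of $E_{n,k} \cap k B_E$, with Minkowski functional $\mu_{n,k}$, a $\sigma(E,F)$-lsc equivalent seminorm. I would then define a new equivalent norm
\begin{equation*}
\rho(x)^2 := \|x\|^2 + \sum_{n,k} 2^{-n-k} \mu_{n,k}(x)^2,
\end{equation*}
which is $\sigma(E,F)$-lsc. To verify LUR at a point $x$, the standard trick is to take $(x_j)$ with $2\rho(x)^2 + 2\rho(x_j)^2 - \rho(x+x_j)^2 \to 0$ and use convexity of $t \mapsto t^2$ to transfer the convergence to each summand, in particular to every $\mu_{n,k}^2$. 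Given $k \in \N$, pick $n$ with $x \in E_{n,k}$ together with a slicing halfspace $H$ for $x$ of $\|\cdot\|$-diameter less than $1/k$ on $E_{n,k}$; I then expect eventually the $x_j$ to be norm-close to the convex hull of $E_{n,k} \cap H$, hence within $O(1/k)$ of $x$ in $\|\cdot\|$. Letting $k \to \infty$ would yield $x_j \to x$ in norm.

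The main obstacle will be the concluding step of the sufficiency direction. The convergence $\mu_{n,k}(x_j) \to \mu_{n,k}(x)$ together with midpoint convergence only tells us that midpoints approach the boundary of $C_{n,k}$ in the direction supported by the functional defining $H$; promoting this to proximity of $x_j$ to the subset $E_{n,k} \cap H$ itself is delicate and requires a convex-combination argument showing that a convex combination lying close to a supporting hyperplane of $C_{n,k}$ has asymptotically all its mass in an arbitrarily small enlargement of the exposed face. This is the heart of the Molt\`o-Orihuela-Troyanski technique and explains why the Minkowski functionals must enter quadratically in the definition of $\rho^2$ rather than linearly or via indicator-like terms.
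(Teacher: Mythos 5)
You should first be aware that the paper does not prove Theorem \ref{LUR_char} at all: it is imported verbatim from Molt\`o--Orihuela--Troyanski \cite{mot97} and used as a black box. So there is no internal proof to compare against, and I assess your argument on its own terms. Your skeleton (necessity from LUR points being strongly exposed by almost-norming functionals in $F$; sufficiency via a weighted sum of squares of $\sigma(E,F)$-lsc gauges and a convexity transfer) is the right one, but both halves have gaps, and the one in the sufficiency direction is fatal as written.

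In the necessity direction your sets $E_{n,m}$ need not cover $E$. If $\rho(x)$ is not close to $n$, no $f$ with $\rho^*(f)\le 1$ makes the set $\{y:\rho(y)\le n,\ f(y)>\rho(x)-1/m\}$ small: already in Hilbert space with $\rho(x)=n-\tfrac12$ this set has diameter of order $\sqrt{n}$. Since every admissible $n$ satisfies $n\ge\rho(x)$ and the least such integer may exceed $\rho(x)$ by almost $1$, such points lie in no $E_{n,m}$. The standard repair is to index by rational radii $r$ and require $\rho(x)\le r$ and $f(x)>r-1/m$ (which forces $\rho(x)>r-1/m$, i.e.\ $x$ sits in a thin annulus), plus one piece near the origin; with that change the ``LUR point is strongly exposed'' argument does produce small slices.

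In the sufficiency direction the problem is precisely the one you flag at the end, but your proposed resolution does not close it. The convexity argument gives $\mu_{n,k}(x_j)\to\mu_{n,k}(x)$ and $\mu_{n,k}(x+x_j)\to 2\mu_{n,k}(x)$, which says that $x$ and $x_j$ asymptotically lie in a common face of $C_{n,k}$ exposed by functionals $g_j$ witnessing $\mu_{n,k}(x+x_j)\approx 2\mu_{n,k}(x)$; but the $g_j$ vary with $j$ and bear no relation to the functional $f$ defining $H$. Nothing in $\rho$ sees $f$, so you cannot conclude $f(x_j)/\mu_{n,k}(x_j)\to\sup_{C_{n,k}}f$, which is what you would need before any ``mass concentrates on the exposed face'' argument applies. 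Concretely, take $C_{n,k}=[-1,1]^2$, $x=(1,1)$ exposed by $f=\tfrac12(e_1^*+e_2^*)$ with an arbitrarily small slice; then $x_j=(1,0)$ satisfies $\mu(x_j)=\mu(x)$ and $\mu(x+x_j)=2\mu(x)$, yet $f(x_j)=\tfrac12$ and $\|x-x_j\|=1$. This is why the actual proof in \cite{mot97} augments the countable family with convex bodies built from the slices themselves (gauges or distance functions attached to pairs such as $\overline{\conv}^{\sigma(E,F)}(E_{n,k}\cap mB_E)$ and $\overline{\conv}^{\sigma(E,F)}\bigl((E_{n,k}\cap mB_E)\setminus H\bigr)$, over a further countable decomposition), so that the derived convergences do detect the halfspace. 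Two smaller issues: $C_{n,k}$ need not be absorbing, so $\mu_{n,k}$ is not an equivalent seminorm as defined (one must add $\delta B_E$ to the sets or switch to distance functions), and the $\sigma(E,F)$-lower semicontinuity of those auxiliary functions itself requires an argument.
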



\section{F-decomposable systems}

Fedorchuk's original technique, as well as modifications used by Ivanov, among others, consists of building inverse systems of topological spaces where the neighboring bonding mappings are fully closed. However, in most of the classical examples all bonding mappings are fully closed, thus making the systems F-decomposable. For an example of a system where this is not the case we refer the reader to \cite{ivanov_qfc}.

As we shall see, this additional property is related to the structure of the space of continuous functions on the limit of such a system. In fact, in Section \ref{characterization} we characterize F-decomposable systems in terms of a property of oscillations of continuous functions on the limit.

We start this section with two propositions that already give us a sense of the nice properties of inverse systems with fully closed bonding mappings. Proposition \ref{fed_level} will be the stepping stone for the construction in Section \ref{constr}.

\begin{proposition} [{\cite[II.3.1]{fedFC}}] \label{fc_limit}
	Let $X = \lim\limits_{\leftarrow}\{X_\alpha, \pi_\alpha^\beta, \alpha, \beta \in \gamma\}$ be the limit of an F-decomposable system. Then the limit mappings $\pi_\alpha: X \to X_\alpha$ are fully closed.
\end{proposition}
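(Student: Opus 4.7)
The plan is to use characterization \eqref{def4} of Proposition \ref{eqDef}: the mapping $\pi_\alpha$ is fully closed exactly when $\pi_\alpha(F_1) \cap \pi_\alpha(F_2)$ is finite for every pair of disjoint closed sets $F_1, F_2 \subset X$. So I fix $\alpha \in \gamma$ and such a pair $F_1, F_2$, and aim to prove the intersection of their $\pi_\alpha$-images is finite.

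The central step is to locate a single level $\beta \geq \alpha$ at which $F_1$ and $F_2$ are already separated, i.e.\ $\pi_\beta(F_1) \cap \pi_\beta(F_2) = \emptyset$. To achieve this I would consider the saturations
\begin{equation*}
A_\beta \; := \; \pi_\beta^{-1}(\pi_\beta(F_1)) \, \cap \, \pi_\beta^{-1}(\pi_\beta(F_2)), \qquad \beta \geq \alpha,
\end{equation*}
each of which is closed in $X$. The relation $\pi_\beta = \pi^\delta_\beta \circ \pi_\delta$ for $\delta \geq \beta$ gives $\pi_\delta^{-1}(\pi_\delta(F_i)) \subset \pi_\beta^{-1}(\pi_\beta(F_i))$, so $(A_\beta)_{\beta \geq \alpha}$ is a decreasing (hence directed by reverse inclusion) family of closed subsets of the compact space $X$. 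I then claim that $\bigcap_\beta A_\beta = F_1 \cap F_2 = \emptyset$. The inclusion $\supset$ is obvious; for $\subset$, given $x$ in the intersection and $i \in \{1, 2\}$, pick for each $\beta$ a point $x_\beta^{(i)} \in F_i$ with $\pi_\beta(x_\beta^{(i)}) = \pi_\beta(x)$. By compactness of $F_i$ the net $(x_\beta^{(i)})_\beta$ has a cluster point $x^{(i)} \in F_i$; for every fixed $\alpha'$ and every $\beta \geq \alpha'$ one has $\pi_{\alpha'}(x_\beta^{(i)}) = \pi^\beta_{\alpha'}(\pi_\beta(x)) = \pi_{\alpha'}(x)$, so $\pi_{\alpha'}(x^{(i)}) = \pi_{\alpha'}(x)$. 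Since the projections $\pi_{\alpha'}$ collectively separate points of the limit, $x^{(i)} = x \in F_i$, so $x \in F_1 \cap F_2 = \emptyset$, a contradiction. Compactness of $X$ applied to the directed family $(A_\beta)$ with empty intersection now forces $A_{\beta_0} = \emptyset$ for some $\beta_0 \geq \alpha$, equivalently $\pi_{\beta_0}(F_1) \cap \pi_{\beta_0}(F_2) = \emptyset$.

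The conclusion is then immediate. Since the system is F-decomposable, the bonding mapping $\pi^{\beta_0}_\alpha$ is fully closed, so by characterization \eqref{def4} applied to the disjoint closed sets $\pi_{\beta_0}(F_1)$ and $\pi_{\beta_0}(F_2)$ in $X_{\beta_0}$, the set $\pi^{\beta_0}_\alpha(\pi_{\beta_0}(F_1)) \cap \pi^{\beta_0}_\alpha(\pi_{\beta_0}(F_2))$ is finite. But this set is exactly $\pi_\alpha(F_1) \cap \pi_\alpha(F_2)$ because $\pi_\alpha = \pi^{\beta_0}_\alpha \circ \pi_{\beta_0}$, finishing the proof.

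The main obstacle is really the separation-at-a-single-level step: although this is a standard fact about inverse limits of Hausdorff compacta, care is needed because $\gamma$ may be an arbitrary ordinal, so one must argue through nets and through compactness applied to a directed family of closed sets rather than a countable decreasing sequence. Once this is done, full F-decomposability (as opposed to merely F-ness) enters in the very last line, through the full closedness of a single bonding map that is in general neither a neighbor of $\alpha$ nor a neighbor of any fixed level, and that is precisely the reason F-decomposability cannot be weakened to F in the hypothesis.
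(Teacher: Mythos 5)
The paper does not actually prove this proposition --- it is imported by citation from Fedorchuk's survey (\cite[II.3.1]{fedFC}) --- so there is no in-paper argument to compare against; judged on its own, your proof is correct and self-contained. Both ingredients check out. The separation step is sound: the sets $A_\beta = \pi_\beta^{-1}(\pi_\beta(F_1)) \cap \pi_\beta^{-1}(\pi_\beta(F_2))$ form a decreasing chain of closed subsets of the compact space $X$, your cluster-point argument (using that the projections separate points of the limit) shows their intersection is $F_1 \cap F_2 = \emptyset$, and compactness then yields a level $\beta_0$ with $\pi_{\beta_0}(F_1) \cap \pi_{\beta_0}(F_2) = \emptyset$; the net formulation is indeed what is needed for index sets of arbitrary ordinal length. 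The conclusion via condition (\ref{def4}) of Proposition \ref{eqDef} applied to the fully closed bonding map $\pi^{\beta_0}_\alpha$ and the disjoint compacta $\pi_{\beta_0}(F_i)$ is immediate, since $\pi_\alpha = \pi^{\beta_0}_\alpha \circ \pi_{\beta_0}$. Your closing remark is also accurate and worth noting: the converse holds as well, because if every limit projection were fully closed then, full closedness passing to the left divisor of a composition ($\pi_\beta = \pi^\alpha_\beta \circ \pi_\alpha$ with $\pi_\alpha$ surjective), every bonding map would be fully closed; so F-decomposability genuinely cannot be weakened to the system merely being an F-system.
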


\begin{proposition}[{\cite[II.1.13]{fedFC}}] \label{fed_level}
	Let $f: X \to Y$ and $g: Y \to Z$ be fully closed mappings between Hausdorff compact spaces with a fully closed composition $g \circ f$. Let $M \subset Z$. Then the mapping $p_g^M \circ f: X \to Z^M$ is also fully closed. 
\end{proposition}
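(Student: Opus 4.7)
The plan is to verify characterization \ref{def4} of Proposition \ref{eqDef}: writing $h := p_g^M \circ f$, I want to show that for any closed disjoint $F_1, F_2 \subset X$, the set $h(F_1) \cap h(F_2) \subset Z^M$ is finite. The key observation is the factorization $g \circ f = \pi_g^M \circ h$, which lets me split any point $w$ of $h(F_1) \cap h(F_2)$ into two cases depending on whether $\pi_g^M(w) \in Z \setminus M$ or $\pi_g^M(w) \in M$ and bound the contribution of each separately.

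On the portion lying above $Z \setminus M$ the mapping $\pi_g^M$ is injective, because by the very definition of the quotient the entire fiber $g^{-1}(z)$ is collapsed to a single class whenever $z \notin M$. Consequently
\begin{equation*}
\{w \in h(F_1) \cap h(F_2) : \pi_g^M(w) \notin M\} \quad \text{injects into} \quad (g \circ f)(F_1) \cap (g \circ f)(F_2),
\end{equation*}
which is finite because $g \circ f$ is fully closed. On the portion above $M$, each point $w$ corresponds canonically to a unique $y \in Y$ with $g(y) \in M$, since such $y$'s form singleton equivalence classes under $p_g^M$; in particular $w \in p_g^M(f(F_i))$ forces $y \in f(F_i)$. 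So this subset embeds into $f(F_1) \cap f(F_2) \cap g^{-1}(M)$, which is finite because $f$ is fully closed. Adding the two finite bounds yields the required finiteness of $h(F_1) \cap h(F_2)$.

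There is no serious obstacle here; the argument is essentially bookkeeping around the quotient $Z^M$. The one subtlety worth recording is that Proposition \ref{comp_fc} alone does not dispatch the points $w$ with $\pi_g^M(w) \in M$, because at such $w$ neither $\pi_g^M$ nor $h$ need be one-to-one, so the ``composition factoring'' criterion cannot be applied pointwise. This is exactly why I prefer to work with the intersection-of-images characterization \ref{def4}, which handles both regimes uniformly and uses the full closedness of $f$ and of $g \circ f$ in a transparent way (the hypothesis that $g$ itself is fully closed enters only through Corollary \ref{quot_fc}, which guarantees that $p_g^M$, and hence the space $Z^M$, is well-behaved).
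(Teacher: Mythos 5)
Your argument is correct. The paper itself gives no proof of Proposition \ref{fed_level} --- it is quoted from Fedorchuk's survey \cite[II.1.13]{fedFC} --- so there is nothing to compare against line by line; what you have written is a valid self-contained proof. The two-case split via the factorization $g \circ f = \pi_g^M \circ (p_g^M \circ f)$ is exactly the right bookkeeping: over $Z \setminus M$ the map $\pi_g^M$ is injective and the finiteness comes from the full closedness of $g \circ f$ via characterization \ref{def4}, while over $M$ the classes are singletons in $Y$ and the finiteness comes from the full closedness of $f$. One small imprecision: the role of the hypothesis that $g$ is fully closed is to make $Z^M$ Hausdorff (Proposition \ref{eqDef}, item \ref{def6}), which is what legitimizes applying the equivalence \ref{def1}$\iff$\ref{def4} to the map $p_g^M \circ f: X \to Z^M$ in the first place; Corollary \ref{quot_fc} (full closedness of $p_g^M$) is not actually needed anywhere in your argument. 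It would be worth stating explicitly that $Z^M$ is a compact Hausdorff space and that $p_g^M \circ f$ is a continuous surjection before invoking \ref{def4}, but these are routine checks, not gaps.
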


\begin{center}
	\begin{tikzcd}
		X \arrow[rr, "f"]  \arrow[rrrd, bend right=20, "p_g^M \circ f "] & &
		Y \arrow[rr, "g"] \arrow[rd, "p_g^M"] & &
		Z \\
		& & & Z^M \arrow[ru, "\pi_g^M"] &
	\end{tikzcd}
\end{center}

\subsection{Construction} \label{constr}

	\begin{figure}[h]
		\centering
		\begin{tikzcd}
			X \arrow[rr, "f_2"]  \arrow[rrrd, bend right=10, "p^{M_1} \circ f_2 "] \arrow[rdd, "q"] & &
			Y \arrow[rr, "f_1"] \arrow[rd, "p^{M_1}"] & &
			Z \\
			& & & Z^{M_1} \arrow[ru, "\pi^{M_1}"] & \\
			& \left(Z^{M_1}\right)^{p^{M_1}(M_2)} \arrow[rru, "\pi^{p^{M_1}(M_2)}"]	&
		\end{tikzcd}
		\caption{A diagram of a three-level construction. Here $M_1 \subset Z$ and $M_2 \subset Y$.}
	\end{figure}
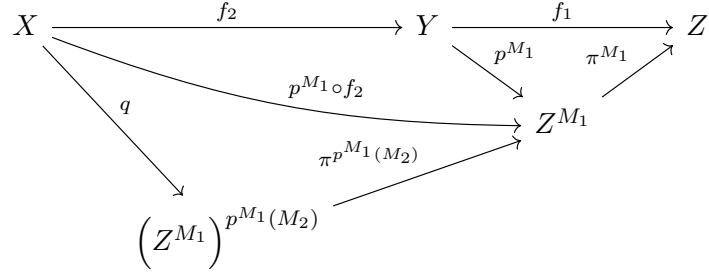

In this section, given an F-decomposable system, we construct an adjacent system, essentially preserving at every level only a specified subset of fibers. There are no requirements on these subsets, except for the natural condition that preserved fibers should correspond to existing points, that is, the projections of these points should lie in the preserved fibers on every lower level. In Section \ref{sect_proof} we will use this construction with the sets of preserved fibers corresponding to large oscillations of continuous functions. More precisely, we have the following:

Let $\{X_\alpha, \pi_\alpha^\beta, \alpha, \beta \in \gamma\}$ be an F-decomposable system. Let $M_\alpha \subset X_\alpha$ be subsets satisfying $\pi^\alpha_\beta (M_\alpha) \subset M_\beta$ for any $\beta \leq \alpha \in \gamma$. We will inductively construct an inverse system $\{K_\alpha, \lambda^\alpha_\beta, \alpha, \beta \in \gamma\}$ in the following way:

Let $K_0 := X_0$ and $K_1 := X_1$. Using the notation in \ref{quot}, we set $K_2 := (X_1)^{M_1}_{\pi_1^2}$. Having constructed the spaces $K_\beta$ and the corresponding mappings $q_\beta: X_\beta \to K_\beta$ for all $\beta \leq \alpha$ for some ordinal $\alpha \in \gamma$, we set the notation $q^\alpha_\beta := q_\beta \circ \pi^\alpha_\beta$ and $N_\alpha := q_\alpha(M_\alpha)$. Now, with $q^{\alpha + 1}_\alpha := q_\alpha \circ \pi^{\alpha+1}_\alpha$, we define
\begin{align*}
	K_{\alpha + 1} &:= (K_\alpha)^{N_\alpha}_{q^{\alpha + 1}_\alpha};\\
	\lambda_\alpha^{\alpha+1} &:= \left(q^{\alpha+1}_\alpha\right)^{N_\alpha}.
\end{align*}
Then $q_{\alpha + 1}$ is the unique mapping giving $q^{\alpha + 1}_\alpha = \lambda_\alpha^{\alpha + 1} \circ q_{\alpha + 1}$. For $\alpha$ a limit ordinal we set $K_\alpha := \lim\limits_{\leftarrow} \{K_\beta, \lambda^\delta_\beta, \beta, \delta \in \alpha\}$. The mapping $q_\alpha$ in this case is given as follows. For $x = \{x_\beta\}_{\beta \in \alpha} \in X$, we have $q_\alpha(x) = \{q_\beta(x_\beta)\}_{\beta \in \alpha}$. Finally, we define $K := \lim\limits_{\leftarrow} \{K_\alpha, \lambda_\alpha^\beta, \alpha, \beta \in \gamma\}$ and $q := q_\gamma$.

The mappings of the construction are summarized in the following diagram:

\begin{center}
	\begin{tikzcd}
		X \arrow[rr, "q"] \arrow[dd, dotted, "\pi_{\alpha+1}"]
		\arrow[ddrr, "q^\gamma_{\alpha+1}"] &
		&K \arrow[dd, dotted, "\lambda_{\alpha+1}"] \\
		\\
		X_{\alpha+1} \arrow[rr, "q_{\alpha+1}"] \arrow[d, "\pi^{\alpha+1}_\alpha"]
		\arrow[drr, "q^{\alpha+1}_\alpha"] &
		& K_{\alpha+1} \arrow[d, "\lambda^{\alpha+1}_\alpha"] \\
		X_\alpha \arrow[rr, "q_\alpha"] \arrow[ddr, dotted, "\pi^\alpha_1", "q^\alpha_1"'] &
		&K_\alpha \arrow[ddl, dotted,  "\lambda^\alpha_1"'] \\
		\\
		&X_1 \cong K_1 &
	\end{tikzcd}
\end{center}

\begin{proposition}\label{constr_well-def}
	For any $\beta \leq \alpha < \gamma$ the mapping $q_\beta^\alpha$ is fully closed. In particular the mappings $q^{\alpha + 1}_\alpha$ are fully closed and thus the spaces $K_\alpha$ are Hausdorff. Consequently, the system $\{K_\alpha, \lambda_\alpha^\beta, \alpha, \beta \in \gamma\}$ is F-decomposable.
\end{proposition}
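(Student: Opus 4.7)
My plan is to prove the statement by transfinite induction on $\alpha$, where the crucial point is to establish that $q_\beta^\alpha$ is fully closed for every $\beta \leq \alpha$; the remaining assertions then follow automatically. Indeed, for successor $\alpha = \delta + 1$ the space $K_\alpha$ is Hausdorff by Proposition \ref{eqDef}(\ref{def6}) applied to the fully closed $q_\delta^\alpha$, for limit $\alpha$ it is Hausdorff as an inverse limit of Hausdorff compacta, and F-decomposability of $\{K_\alpha, \lambda_\alpha^\beta\}$ follows from the factorisation $q_\beta^\alpha = \lambda_\beta^\alpha \circ q_\alpha$ together with the observation (noted just after Proposition \ref{eqDef}) that the left divisor of a fully closed composition is fully closed.

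Inside the outer induction step I run an inner transfinite induction on $\beta$ from $0$ up to $\alpha$. The base $\beta = 0$ is immediate since $q_0^\alpha = \pi_0^\alpha$ is fully closed by F-decomposability of the original system. For a successor $\beta = \beta' + 1 \leq \alpha$ the construction gives $q_\beta = p_{q_{\beta'}^\beta}^{N_{\beta'}}$, hence $q_\beta^\alpha = p_{q_{\beta'}^\beta}^{N_{\beta'}} \circ \pi_\beta^\alpha$, and I apply Proposition \ref{fed_level} with $f := \pi_\beta^\alpha$, $g := q_{\beta'}^\beta$ and $M := N_{\beta'}$. Here $f$ is fully closed by F-decomposability (trivially so when $\beta = \alpha$); $g$ is fully closed either by the outer hypothesis (when $\beta < \alpha$) or by the inner hypothesis applied one step below (when $\beta = \alpha$, so that $g = q_{\alpha-1}^\alpha$ was settled at the previous sub-step); and the third premise, full closedness of $g \circ f = q_{\beta'}^\alpha$, is just the inner hypothesis for $\beta'$. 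The case $\beta = \alpha$ moreover coincides with Corollary \ref{quot_fc} and settles $q_\alpha$ in the successor case.

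The delicate case is the inner step at a limit ordinal $\beta$, and in particular at $\beta = \alpha$ when $\alpha$ itself is a limit, which is how one obtains that $q_\alpha$ is fully closed in that case. By the outer hypothesis the sub-system $\{K_{\beta'}\}_{\beta' < \beta}$ is F-decomposable, so Proposition \ref{fc_limit} makes every limit projection $\lambda_{\beta'}^\beta: K_\beta \to K_{\beta'}$ fully closed. For closed disjoint $F_1, F_2 \subset X_\alpha$ the set $S := q_\beta^\alpha(F_1) \cap q_\beta^\alpha(F_2)$ is closed in $K_\beta$ and, by the inner hypothesis, its image under each $\lambda_{\beta'}^\beta$ lies in the finite set $q_{\beta'}^\alpha(F_1) \cap q_{\beta'}^\alpha(F_2)$. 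One then has to show, using the compactness description of $S$ as the inverse limit of its finite projections $\lambda_{\beta'}^\beta(S)$ and the full closedness of the bondings $\lambda_{\beta'}^{\beta''}$, that $S$ itself is finite. I expect this to be the main technical obstacle; an alternative route is to translate via Proposition \ref{c_0.osc} into the oscillation picture and exploit the decreasing convergence $\osc_{(q_\beta^\alpha)^{-1}(y)} f = \lim_{\beta' \to \beta} \osc_{(q_{\beta'}^\alpha)^{-1}(\lambda_{\beta'}^\beta(y))} f$ for $f \in C(X_\alpha)$, reducing the problem to a $c_0$-type estimate at each earlier level. Once the limit case is settled, both inductions close and the proposition follows.
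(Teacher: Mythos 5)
Your overall skeleton --- an outer transfinite induction on $\alpha$ with an inner induction on $\beta$, Proposition \ref{fed_level} handling the successor steps, and the Hausdorffness and F-decomposability assertions falling out at the end --- matches the paper's proof. The problem is the limit case of the inner induction, which is exactly where the real work lies and which you leave open. The route you sketch is to verify criterion (\ref{def4}) of Proposition \ref{eqDef}: for disjoint closed $F_1,F_2\subset X_\alpha$ you observe that each projection $\lambda^\beta_{\beta'}\bigl(q^\alpha_\beta(F_1)\cap q^\alpha_\beta(F_2)\bigr)$ lies in the finite set $q^\alpha_{\beta'}(F_1)\cap q^\alpha_{\beta'}(F_2)$, and then you ``have to show'' that $S:=q^\alpha_\beta(F_1)\cap q^\alpha_\beta(F_2)$ is itself finite. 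But a compact set all of whose projections in an inverse system are finite need not be finite --- the Cantor set is an inverse limit of finite sets --- so finiteness of the level-wise projections alone proves nothing; you would need, say, a uniform bound on $\abs{q^\alpha_{\beta'}(F_1)\cap q^\alpha_{\beta'}(F_2)}$ as $\beta'\to\beta$, and nothing in your argument supplies one. The alternative you mention via Proposition \ref{c_0.osc} has the same defect: the pointwise convergence $\osc_{(q^\alpha_\beta)^{-1}(y)}f=\lim_{\beta'\to\beta}\osc_{(q^\alpha_{\beta'})^{-1}(\lambda^\beta_{\beta'}(y))}f$ does not by itself yield the required uniform $c_0$-statement over all $y\in K_\beta$. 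You correctly flag this as ``the main technical obstacle'', but flagging it does not close it, and it is precisely the content of the proposition.

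The paper resolves the limit case by a pointwise argument with Definition \ref{fully_closed} rather than with criterion (\ref{def4}). Fix $y\in K_\beta$. If $\lambda^\beta_\delta(y)\notin N_\delta$ for some $\delta<\beta$, then $\lambda^\beta_\delta$ is one-to-one at $y$, and Proposition \ref{comp_fc} applied to the factorisation $q^\alpha_\delta=\lambda^\beta_\delta\circ q^\alpha_\beta$ gives full closedness of $q^\alpha_\beta$ at $y$. If $\lambda^\beta_\delta(y)\in N_\delta$ for all $\delta<\beta$, then $q_\beta$ is one-to-one at $y$, which finishes the case $\beta=\alpha$ immediately; for $\beta<\alpha$ one takes a finite open cover $U_1,\dots,U_n$ of $(q^\alpha_\beta)^{-1}(y)$, uses full closedness of $\pi^\alpha_\beta$ to produce a basic neighborhood $(\lambda^\beta_{\delta_0})^{-1}(B_{\delta_0})$ of $y$ over whose fibers the $U_i$ still form covers, and then shows that $\bigcup_i(q^\alpha_\beta)^{\#}(U_i)\cup\{y\}$ is a neighborhood of $y$ by combining the pulled-back neighborhoods $(\lambda^\beta_\delta)^{-1}\bigl(\bigcup_i(q^\alpha_\delta)^{\#}(U_i)\cup\{\lambda^\beta_\delta(y)\}\bigr)$ over $\delta\in[\delta_0,\beta)$ and using $\bigcap_{\delta}(\lambda^\beta_\delta)^{-1}(\lambda^\beta_\delta(y))=\{y\}$. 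This is the argument you need to supply in place of the unproved finiteness claim.
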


\begin{proof}
	Suppose that we have shown that for some $\alpha \in \gamma$ all the mappings $q^\eta_\delta$ with $\delta \leq \eta < \alpha$ are fully closed. Observe that the mapping $q^\alpha_1 = \pi^\alpha_1$ is fully closed by supposition. We now proceed by induction on $\beta$ and suppose further that for some $\beta \leq \alpha$ the mappings $q^\alpha_\delta$ are fully closed for all $\delta < \beta$. First, if $\beta = (\beta - 1) + 1$, we can directly apply Proposition \ref{fed_level} with $X = X_\alpha$, $Y = X_\beta$, $Z = K_{\beta - 1}$, $M = N_{\beta - 1}$, $f = \pi^\alpha_\beta$ and $g = q^\beta_{\beta - 1}$, so $q^\alpha_\beta = q_\beta \circ \pi^\alpha_\beta$ is fully closed.
	
	Now consider the case of $\beta$ a limit ordinal. We shall first deal with the easier case where $\beta = \alpha$. To this end let $y \in K_\beta$. If $\lambda^\beta_\delta(y) \notin N_\delta$ for some $\delta < \beta$, we have that $\lambda^\beta_\delta$ is one-to-one at $y$. As $q^\beta_\delta$ is fully closed by the induction hypothesis and $q^\beta_\delta = \lambda^\beta_\delta \circ q_\beta$, Proposition \ref{comp_fc} gives us the full closedness of $q_\beta$ at $y$. If $\lambda^\beta_\delta(y) \in N_\delta$ for all $\delta < \beta$, then $q_\beta$ is one-to-one at $y$ and the conclusion follows.
	
	The remaining case is that of $\beta$ a limit ordinal with $\beta < \alpha$. Let  $y \in K_\beta$ and consider again the two cases. If $\lambda^\beta_\delta(y) \notin N_\delta$ for some $\delta < \beta$, the same argument as above gives us the full closedness of $q^\alpha_\beta$ at $y$.
	
	If $\lambda^\beta_\delta(y) \in N_\delta$ for all $\delta < \beta$, then $q_\beta$ is one-to-one at $y$. Let $U_1, \dots U_n$ be a finite open cover of the fiber $(q^\alpha_\beta)^{-1}(y) = (\pi^\alpha_\beta)^{-1}\left(q_\beta^{-1}(y)\right)$. From the full closedness of $\pi^\alpha_\beta$ we have that the set
	\begin{equation*}
		U := \bigcup_{i = 1}^n (\pi^\alpha_\beta)^\# U_i \cup \{q_\beta^{-1}(y)\}
	\end{equation*} 
	is a neighborhood of $q_\beta^{-1}(y)$.
	
	Denote $V = q_\beta^\#(U)$. As the last set is a neighborhood of $y$, there is a basic neighborhood $B \subset V$ of the form $B = \left(\lambda^\beta_{\delta_0}\right)^{-1}(B_{\delta_0})$ for some $\delta_0 < \beta$. Now assume that $U_1, \dots, U_n$ is not a cover of $(q^\alpha_{\delta_0})^{-1}\left(\lambda^\beta_{\delta_0}(y)\right)$. Then there exist $y' \in (\lambda^\beta_{\delta_0})^{-1}\left(\lambda^\beta_{\delta_0}(y)\right)$ and $x' \in (q^\alpha_\beta)^{-1}(y')$ such that $x' \notin U_1, \dots, U_n$. Then $y' \notin (q^\alpha_\beta)^\#\left(\bigcup\limits_{i = 1}^n U_i\right)$, but $B \setminus \{y\}$ is contained in the last set, a contradiction.
	
	Now for any $\delta \in \left[\delta_0, \beta\right)$, using the full closedness of $q^\alpha_\delta$, we have that $\bigcup\limits_{i = 1}^n (q^\alpha_\delta)^\# (U_i) \cup \{\lambda^\beta_\delta(y)\}$ is a neighborhood of $\lambda^\beta_\delta (y)$. Then $\left(\lambda^\beta_\delta\right)^{-1}\left(\bigcup\limits_{i = 1}^n (q^\alpha_\delta)^\# (U_i) \cup \{\lambda^\beta_\delta(y)\}\right)$ is a neighborhood of $y$. On the other hand
	\begin{align*}
		&\left(\lambda^\beta_\delta\right)^{-1}\left(\bigcup\limits_{i = 1}^n (q^\alpha_\delta)^\# (U_i) \cup \{\lambda^\beta_\delta(y)\}\right) \\
		= \quad & \bigcup\limits_{i = 1}^n \left(\lambda^\beta_\delta\right)^{-1} \left((q^\alpha_\delta)^\# (U_i)\right) \cup \left(\lambda^\beta_\delta\right)^{-1} \left(\lambda^\beta_\delta(y)\right) \\
		\subset \quad & \bigcup\limits_{i = 1}^n (q^\alpha_\beta)^\# (U_i) \cup \left(\lambda^\beta_\delta\right)^{-1} \left(\lambda^\beta_\delta(y)\right).
	\end{align*}
	We then obtain:
	\begin{align*}
		&\bigcup\limits_{i = 1}^n (q^\alpha_\beta)^\# (U_i) \\
		\supset \quad &\bigcup_{\delta_0 \leq \delta < \beta} \left(\bigcup\limits_{i = 1}^n \left(\lambda^\beta_\delta\right)^{-1} \left((q^\alpha_\delta)^\# (U_i)\right)\right) \\
		\supset \quad &\bigcup_{\delta_0 \leq \delta < \beta} \left(\left(\lambda^\beta_\delta\right)^{-1}\left(\bigcup\limits_{i = 1}^n (q^\alpha_\delta)^\# (U_i) \cup \{\lambda^\beta_\delta(y)\}\right) \setminus \left(\lambda^\beta_\delta\right)^{-1} \left(\lambda^\beta_\delta(y)\right) \right) \\
		= \quad &\bigcup_{\delta_0 \leq \delta < \beta} \left(\left(\lambda^\beta_\delta\right)^{-1}\left(\bigcup\limits_{i = 1}^n (q^\alpha_\delta)^\# (U_i) \cup \{\lambda^\beta_\delta(y)\}\right)\right) \; \setminus \; \bigcap_{\delta_0 \leq \delta < \beta} \left(\lambda^\beta_\delta\right)^{-1} \left(\lambda^\beta_\delta(y)\right).
	\end{align*}
	As $\bigcap\limits_{\delta_0 \leq \delta < \beta} \left(\lambda^\beta_\delta\right)^{-1} \left(\lambda^\beta_\delta(y)\right) = \{y\}$, $\bigcup\limits_{i = 1}^n (q^\alpha_\beta)^\# (U_i) \cup \{y\}$ is a neighborhood of $y$, which concludes the proof that $q^\alpha_\beta$ is fully closed. The statement now follows by induction on $\alpha$.
\end{proof}


\section{Continuous functions on limits of inverse systems} \label{characterization}

Recall that a tree is a partially ordered set $(\Upsilon, \preceq)$ such that for any $t_0 \in \Upsilon$ the subset $\{t \in \Upsilon: t \preceq t_0\}$ is well-ordered by $\preceq$. Following \cite{haydon_trees}, we append an element $0 \in \Upsilon$ that satisfies $0 \preceq t$ for all $t \in \Upsilon$. We consider on $\Upsilon$ the coarsest topology for which the intervals of the form $(0,t]$ are clopen. Observe that this topology is locally compact. We are only interested in trees that are Hausdorff. 

By $C_0(\Upsilon)$ we denote the Banach space of continuous functions on $\Upsilon$ for which $\{t \in \Upsilon: \abs{f(t)} \geq \epsilon\}$ is compact for any $\epsilon >0$. Equivalently, $C_0(\Upsilon)$ can be considered as the space of continuous functions on the one-point compactification $\alpha \Upsilon$ that vanish at $\infty$. The following simple observation is of great significance for the properties of this space (we refer the reader to \cite{haydon_trees} for more information on the topic).

\begin{proposition}[see e.g. {\cite[Lemma 2.1]{haydon_trees}}] \label{finite_antichanains}
	Let $\Upsilon$ be a tree, $f \in C_0(\Upsilon)$ and $\epsilon >0$. Then the set $\{t \in \Upsilon: \abs{f(t)} > \epsilon\}$ does not contain an infinite antichain.
\end{proposition}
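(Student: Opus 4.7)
The plan is to argue by contradiction, exploiting three features together: the $C_0$-condition gives compactness, the tree topology gives clopen neighborhoods $(0,t^*]$, and the tree axiom forces these neighborhoods to consist of a chain.

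Suppose, for the sake of contradiction, that $A \subset \{t \in \Upsilon : |f(t)| > \epsilon\}$ is an infinite antichain. The first step is to extract a cluster point of $A$. Since $f \in C_0(\Upsilon)$, the superlevel set $\{t \in \Upsilon : |f(t)| \geq \epsilon\}$ is compact and contains $A$. Equivalently, working in the one-point compactification $\alpha\Upsilon$ on which $f$ extends continuously by $f(\infty)=0$, the infinite set $A$ has a cluster point $t^* \in \alpha \Upsilon$; because $|f| > \epsilon$ on $A$ and $f$ is continuous, $t^*$ cannot be $\infty$, so $t^* \in \Upsilon$.

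Next I would exploit the clopen basis of the tree topology. The interval $(0,t^*]$ is, by definition of the topology on $\Upsilon$, a clopen neighborhood of $t^*$. Since $t^*$ is a cluster point of $A$, every neighborhood of $t^*$ meets $A$ in infinitely many points, so in particular $A \cap (0,t^*]$ is infinite.

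Finally I invoke the tree axiom: the set $\{t \in \Upsilon : t \preceq t^*\} = (0,t^*]$ is well-ordered by $\preceq$, hence totally ordered. Therefore any two elements of $A \cap (0,t^*]$ are comparable. This contradicts the assumption that $A$ is an antichain and completes the proof. There is no real obstacle here; the only subtlety is noticing that the cluster point must lie in $\Upsilon$ rather than at infinity, which is immediate from the definition of $C_0(\Upsilon)$.
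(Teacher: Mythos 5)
Your argument is correct: compactness of $\{t:\abs{f(t)}\geq\epsilon\}$ yields an accumulation point $t^*\in\Upsilon$ of the putative infinite antichain, the clopen interval $(0,t^*]$ then contains at least two of its points, and these are comparable since $\{t: t\preceq t^*\}$ is well-ordered. The paper offers no proof of this statement (it is quoted from Haydon), and your reasoning is essentially the standard argument given there, so there is nothing to compare beyond noting that it is sound.
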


\begin{remark}
	In fact, as the neighborhoods of $\infty$ in $\alpha \Upsilon$ are of the form $\Upsilon \setminus \bigcup_{i = 1}^k [0, t_i]$, we easily see that the conditions that $\{t \in \Upsilon: \abs{f(t)} > \epsilon\}$ does not contain an infinite antichain and that $f$ goes to $0$ on every unbounded increasing sequence are equivalent to the continuity of $f$ at $\infty$.
\end{remark}

Now let $S:= \{X_\alpha, \pi_\alpha^\beta, \alpha, \beta \in \gamma\}$ be a continuous inverse system of Hausdorff compacta, where $\gamma$ is some ordinal and let $X := \lim\limits_{\leftarrow} S$. Define $\Upsilon(S) := \bigcup \{X_\alpha, \alpha \in \gamma\}$ and a partial order $\preceq$ on $\Upsilon(S)$ as follows. If $y \in X_\alpha, x \in X_\beta$, then
\begin{equation*}
	y \preceq x \quad \iff \quad \alpha \leq \beta \text{ and } \pi_\alpha^\beta(x) = y.
\end{equation*}
We shall refer to $\Upsilon(S)$ as the \textit{tree of the system} $S$. It is straightforward to check that $(\Upsilon(S), \preceq)$ is indeed a tree. We now consider the following mapping:
\begin{align*}
	\Phi: C(X) \to \l_\infty(\Upsilon(S)) \\
	\Phi(f)(x) := \osc_{\pi_\alpha^{-1}(x)} f,
\end{align*}
whenever $x \in X_\alpha$, $\alpha \in \gamma$.

Before we proceed with the main result of this section, let us introduce some notation, inspired by Section \ref{constr}, that will be useful for what follows as well as in Section \ref{sect_proof}.

Let $S:= \{X_\alpha, \pi_\alpha^\beta, \alpha, \beta \in \gamma\}$ be a continuous inverse system of Hausdorff compacta, where $\gamma$ is some ordinal and let $X := \lim\limits_{\leftarrow} S$. For a function $f \in C(X)$, $\epsilon >0$, and $\alpha \in \gamma$ we set $M^{f, \epsilon}_\alpha := \{x \in X_\alpha: \osc\limits_{\pi_\alpha^{-1}(x)} f > \epsilon\}$. Further, we set $\nu_f := \{\alpha \in \gamma: \pi^{\alpha + 1}_\alpha(M^{f, \epsilon}_{\alpha+1}) \neq M^{f, \epsilon}_\alpha\}$. Observe that if $S$ is F-decomposable, the set $M^{f, \epsilon}_\alpha$ is finite for any $\alpha \in \mu$ by Proposition \ref{fc_limit} and Proposition \ref{c_0.osc}. Moreover, we have the following:

\begin{lemma} \label{finite_branching}
	Let $S:= \{X_\alpha, \pi_\alpha^\beta, \alpha, \beta \in \gamma\}$ be an F-decomposable system and $X$ be its limit. Then for any $f \in C(X)$,  $\nu_f$ is finite.
\end{lemma}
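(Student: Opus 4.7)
The plan is to argue by contradiction. Assume $\nu_f$ is infinite and extract a strictly increasing sequence $\alpha_1 < \alpha_2 < \cdots$ of elements of $\nu_f$. For each $n$, pick a terminal witness $y_n \in M^{f,\epsilon}_{\alpha_n} \setminus \pi^{\alpha_n+1}_{\alpha_n}(M^{f,\epsilon}_{\alpha_n+1})$; since $\osc_{\pi_{\alpha_n}^{-1}(y_n)} f > \epsilon$, choose $u_n, v_n \in \pi_{\alpha_n}^{-1}(y_n) \subset X$ attaining $f(u_n) - f(v_n) > \epsilon$.

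The crucial ingredient, recorded just before the lemma, is that each $M^{f,\epsilon}_{\alpha_k}$ is finite: Proposition \ref{fc_limit} makes every limit projection $\pi_\alpha$ fully closed, and Proposition \ref{c_0.osc} then forces $y \mapsto \osc_{\pi_\alpha^{-1}(y)} f$ into $c_0(X_\alpha)$. Using this finiteness together with a diagonal argument, pass to a subsequence so that for each fixed $k$ the ancestors $\pi^{\alpha_n}_{\alpha_k}(y_n)$ are eventually constant, equal to some $y_k^* \in M^{f,\epsilon}_{\alpha_k}$, and so that $(u_n, v_n)$ converges in the compact space $X \times X$ to a pair $(u^*, v^*)$. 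Continuity of $f$ and of the projections then yields $|f(u^*) - f(v^*)| \geq \epsilon$ together with $\pi_{\alpha_k}(u^*) = \pi_{\alpha_k}(v^*) = y_k^*$ for every $k$.

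Set $\beta := \sup_n \alpha_n$. If $\beta = \gamma$, the $\alpha_k$ are cofinal in $\gamma$, so any $\alpha < \gamma$ is dominated by some $\alpha_k$, giving $\pi_\alpha(u^*) = \pi^{\alpha_k}_\alpha(y_k^*) = \pi_\alpha(v^*)$; the inverse-limit property then forces $u^* = v^*$, which contradicts $|f(u^*) - f(v^*)| > 0$. If instead $\beta < \gamma$, then $\beta$ is a limit ordinal, continuity of the system at $\beta$ identifies $\pi_\beta(u^*) = \pi_\beta(v^*) =: y^* \in X_\beta$, and we arrive at $\osc_{\pi_\beta^{-1}(y^*)} f \geq \epsilon$.

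The main obstacle is this second case, where we only recover a non-strict inequality. The strategy is to exploit that $u^* \neq v^*$ (since $f(u^*) \neq f(v^*)$) and single out the least level $\alpha^* > \beta$ at which $\pi_{\alpha^*}(u^*) \neq \pi_{\alpha^*}(v^*)$; continuity of the system at limit ordinals forces $\alpha^*$ to be a successor $\alpha^{**}+1$, so at level $\alpha^{**}$ the common image $w := \pi_{\alpha^{**}}(u^*)$ has two distinct successors in $X_{\alpha^*}$ on whose $\pi_{\alpha^*}$-preimages $f$ differs by at least $\epsilon$. Iterating the extraction and limit procedure above this new level---in effect climbing the ordinal spine---should eventually exhaust $\gamma$ and reduce everything to the first case, yielding the desired contradiction. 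Equivalently, one can reinterpret the whole argument as showing that the terminal nodes $\{y_n\}$, which form an antichain in the tree $\Upsilon(S)$, cannot be infinite without violating the $c_0$-behaviour of the oscillation function guaranteed at every level.
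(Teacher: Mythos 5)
Your setup---extracting an increasing sequence $\alpha_1<\alpha_2<\cdots$ in $\nu_f$, using the finiteness of each $M^{f,\epsilon}_{\alpha_k}$ to diagonalize onto a coherent thread $(y_k^*)_k$ and a cluster pair $(u^*,v^*)$---is sound (modulo using cluster points rather than limits of subsequences, since $X$ need not be sequentially compact), and your disposal of the case $\beta:=\sup_n\alpha_n=\gamma$ is a correct alternative to the paper's finite-subcover argument for ruling out $\gamma$ as an accumulation point. But the case $\beta<\gamma$, which is the entire content of the lemma, is not actually proved. All you obtain there is a point $y^*\in X_\beta$ with $\osc_{\pi_\beta^{-1}(y^*)}f\ge\epsilon$, and this is not contradictory: such a point can perfectly well exist. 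Your proposed fix---locating the first level above $\beta$ at which $u^*$ and $v^*$ separate and ``climbing the ordinal spine''---cannot work, because the infinitely many elements of $\nu_f$ you must refute all lie \emph{below} $\beta$; nothing you construct above $\beta$ revisits them, and the structure you find up there merely re-encodes the statement $\osc_{\pi_\beta^{-1}(y^*)}f\ge\epsilon$. The closing appeal to ``the $c_0$-behaviour of the oscillation function at every level'' also fails: the terminal witnesses $y_n$ sit at pairwise distinct levels $\alpha_n$, so they never form an infinite subset of any single $M^{f,\epsilon}_\alpha$, and the antichain statement $\Phi(f)\in C_0(\Upsilon(S))$ of Corollary \ref{charact_tree} is deduced \emph{from} Lemma \ref{finite_branching}, so invoking it here would be circular.

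The missing idea is the paper's localization step, which converts information at the single level $\beta$ into control over a whole interval $[\beta_0,\beta)$ of levels. Concretely: the set $N_\beta:=\{x\in X_\beta:\osc_{\pi_\beta^{-1}(x)}f\ge\epsilon\}$ is finite; using the quotient construction of Section \ref{constr} together with the Tietze theorem one writes $f=g+\tilde f$, where $\tilde f$ carries all the oscillation of $f$ over $\pi_\beta^{-1}(N_\beta)$ and $g$ has all level-$\beta$ oscillations strictly below $\epsilon$; the Michael-selection estimate of Proposition \ref{metr_proj}, combined with a compactness argument in $X_\beta$, then yields $\beta_0<\beta$ with $\osc_{\pi_\delta^{-1}(x)}g<\epsilon$ for all $\delta\in[\beta_0,\beta)$, hence $M^{f,\epsilon}_\delta\subset\pi^\beta_\delta(N_\beta)$ on that interval. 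Since for each $x\in N_\beta$ the map $\delta\mapsto\osc_{\pi_\delta^{-1}(\pi^\beta_\delta(x))}f$ is non-increasing and stays $\ge\epsilon$, each such $x$ can contribute at most one level to $\nu_f\cap[\beta_0,\beta)$, so this intersection has at most $|N_\beta|$ elements---contradicting the fact that it contains all but finitely many of the $\alpha_n$. Without some version of this step (or another mechanism that bounds $M^{f,\epsilon}_\delta$ uniformly for $\delta$ in a tail of $[0,\beta)$), your argument does not close.
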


\begin{proof}
	Let $f \in C(X)$ and assume that the set $\nu_f$ is infinite. Then it has an accumulation point, denote it by $\alpha_0$. Observe that if $\gamma$ is a limit ordinal, then the sets of the form $\pi_\alpha^{-1}(U)$ for $\alpha \in \gamma$ and $U$ open in $X_\alpha$ form a base for the topology of $X$. We can thus take an open cover $\{U^{f,\epsilon}_x: x \in X\}$ consisting of basic neighborhoods such that $\osc_{U^{f,\epsilon}_x} f < \epsilon$. Then from the compactness of $X$ we find a finite subcover $\{U_1, \dots U_n\}$. As each $U_i = \pi_{\alpha_i}^{-1}(V_{\alpha_i})$ for some $V_{\alpha_i}$ open in $X_{\alpha_i}$, we have that $\osc_{\pi_\beta^{-1}(x)} f < \epsilon$ for all $\beta \geq \max \{\alpha_1, \dots, \alpha_n\}$. This shows that $\alpha_0 \neq \gamma$.
	
	Now suppose that $\max\{\osc_{\pi_{\alpha_0}^{-1}(x)} f: x \in X_{\alpha_0}\} = \kappa < \epsilon$ for all $x \in X_{\alpha_0}$. Then by Proposition \ref{metr_proj} there exists a function $g \in C(X_{\alpha_0})$ such that $\|g -f\| = \frac{\kappa}{2}$. By the same argument as above, we find $\beta_0 < \alpha_0$ such that $\osc_{\pi_{\beta_0}^{-1}(x)} g < \epsilon - \kappa$ for all $\beta \in [\beta_0, \alpha_0)$. We then have
	\begin{equation*}
		\osc_{\pi_{\beta_0}^{-1}(x)} f  \leq \osc_{\pi_{\beta_0}^{-1}(x)} g + 2\|f-g\|< \epsilon - \kappa + 2 \frac{\kappa}{2} = \epsilon.
	\end{equation*}
	Thus $M^{f, \epsilon}_\beta = \emptyset$ for all $\beta \in [\beta_0, \alpha_0)$, which contradicts the assumption that $\alpha_0$ is an accumulation point of $\nu_f$.
	
	Suppose now that $N_{\alpha_0} := \{x \in X_{\alpha_0}: \osc_{\pi_{\alpha_0}^{-1}(x)} f \geq \epsilon\}$ is nonempty. Define $N_\beta := \pi^{\alpha_0}_\beta (N_{\alpha_0})$ for $\beta < \alpha_0$ and $N_\beta := (\pi^\beta_{\alpha_0})^{-1} (N_{\alpha_0})$ for $\beta > \alpha_0$. We will apply the construction of Section \ref{constr} with the sets $\{N_\alpha: \alpha \in \gamma\}$. Denote the resulting inverse system $\{K_\alpha, \lambda^\alpha_\beta, \gamma\}$, its limit $K$ and $q_\alpha: X_\alpha \to K_\alpha$ the corresponding mappings, as defined in Section \ref{constr}. By construction $\lambda_{\alpha_0}^{-1}(q_{\alpha_0}(N_{\alpha_0}))$ and $\pi_{\alpha_0}^{-1}(N_{\alpha_0})$ are homeomorphic. We can thus regard the restriction of $f$ to $\pi_{\alpha_0}^{-1}(N_{\alpha_0})$ as a continuous function on $\lambda_{\alpha_0}^{-1}(q_{\alpha_0}(N_{\alpha_0}))$. For simplicity we will again denote this function by $f$. Now as the set $N_{\alpha_0}$ is finite, $\lambda_{\alpha_0}^{-1}(q_{\alpha_0}(N_{\alpha_0}))$ is closed in the Hausdorff compact space $K$. We use the Tietze  theorem to obtain a continuous extension $\tilde{f} \in C(K)$ of $f$. Once again preserving notation after embedding, we obtain the function $\tilde{f} \in C(X)$ that satisfies $\osc_{\pi_\beta^{-1}(x)} \tilde{f} = 0$ for all $\beta \in \gamma$ and all $x \notin N_\beta$.
	
	Consider the function $g := f - \tilde{f}$. We have that $g\vert_{\pi_{\alpha_0}^{-1}(N_{\alpha_0})} = 0$ and $\osc_{\pi_{\alpha_0}^{-1}(x)} g = \osc_{\pi_{\alpha_0}^{-1}(x)} f$ for all $x \in X_{\alpha_0} \setminus N_{\alpha_0}$. Consequently, $\osc_{\pi_{\alpha_0}^{-1}(x)} g < \epsilon$ for all $x \in X_{\alpha_0}$. As in the first case we considered for the function $f$, we find an ordinal $\beta_0 < \alpha_0$ such that $\osc_{\pi_\beta^{-1}(x)} g < \epsilon$ for all $\beta \in [\beta_0, \alpha_0)$ and $x \in X_\beta$.
	
	We directly obtain $\osc_{\pi_\beta^{-1}(x)} f < \epsilon$ for all $\beta \in [\beta_0, \alpha_0)$ and all $x \in X_\beta \setminus N_\beta$. Thus $M^{f, \epsilon}_\beta \subset N_\beta$ for all $\beta \in [\beta_0, \alpha_0)$ and consequently $\nu_f \cap [\beta_0, \alpha_0) = \{\beta \in [\beta_0, \alpha_0): \osc_{\pi_\beta^{-1}(\pi^{\alpha_0}_\beta(x))} f > \epsilon \text{ and } \osc_{\pi_{\beta+1}^{-1}(\pi^{\alpha_0}_{\beta+1}(x))} f = \epsilon \text{ for some } x \in N_{\alpha_0}\}$. As $N_{\alpha_0}$ is finite, the last set also has to be finite, thus contradicting the assumption that $\alpha_0$ is an accumulation point of $\nu_f$. 
\end{proof}

\begin{corollary} \label{charact_tree}
	Let $S := \{X_\alpha, \pi_\alpha^\beta, \alpha, \beta \in \gamma\}$ be a continuous inverse system, $X$ its limit and $\Upsilon(S)$ the tree of $S$. Then $\Phi$ maps $C(X)$ into $C_0(\Upsilon(S))$ if and only if $S$ is F-decomposable.
\end{corollary}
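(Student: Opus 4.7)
The plan is to prove each implication separately.

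For the \emph{reverse implication} I would argue by contrapositive. Suppose some bonding map $\pi^\beta_\alpha$ is not fully closed. By Proposition \ref{c_0.osc} there is $g \in C(X_\beta)$ with $\bigl(\osc_{(\pi^\beta_\alpha)^{-1}(y)} g\bigr)_{y \in X_\alpha} \notin c_0(X_\alpha)$. Lift it to $f := g \circ \pi_\beta \in C(X)$; since $\pi_\alpha = \pi^\beta_\alpha \circ \pi_\beta$, the restriction $\Phi(f)|_{X_\alpha}$ coincides with this oscillation, so for some $\epsilon > 0$ the set $\{y \in X_\alpha : \Phi(f)(y) \geq \epsilon\}$ is infinite. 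Since distinct points of a single level are pairwise incomparable in $\Upsilon(S)$, this yields an infinite antichain in $\{|\Phi(f)| > \epsilon/2\}$, contradicting $\Phi(f) \in C_0(\Upsilon(S))$ via Proposition \ref{finite_antichanains}.

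For the \emph{forward implication}, fix $f \in C(X)$. I would first verify that $\Phi(f)$ is continuous on $\Upsilon(S)$: points at successor levels are isolated in the tree topology, and at a limit-level point $t \in X_\lambda$ basic neighborhoods are intervals $(t',t]$ with $t' \prec t$; as $t'$ moves toward $t$ along its ancestor chain, the fibers $\pi_{\alpha(t')}^{-1}(t')$ shrink to $\pi_\lambda^{-1}(t)$, and the continuity of oscillation of a continuous function under decreasing intersections of compacta gives $\Phi(f)(t') \to \Phi(f)(t)$.

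To show that $\Phi(f)$ vanishes at infinity in $\alpha\Upsilon(S)$, I would invoke the remark following Proposition \ref{finite_antichanains}: for every $\epsilon > 0$ I must establish (a) $\Phi(f)(t_n) \to 0$ along every unbounded increasing chain and (b) $\{|\Phi(f)| > \epsilon\}$ contains no infinite antichain. Condition (a) I obtain from the compactness argument opening the proof of Lemma \ref{finite_branching}: cover $X$ by basic opens $\pi_{\alpha_x}^{-1}(V_x)$ of oscillation below $\epsilon$, extract a finite subcover $U_1,\dots,U_n$, and set $\alpha_0 := \max\{\alpha_1,\dots,\alpha_n\}$; every fiber $\pi_\beta^{-1}(x)$ with $\beta \geq \alpha_0$ lies inside some $U_i$, so $\Phi(f)(x) < \epsilon$ there. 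Unbounded chains in $\Upsilon(S)$ have levels cofinal in $\gamma$ and hence eventually surpass $\alpha_0$.

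The main obstacle is condition (b), where Lemma \ref{finite_branching} is the central ingredient. Each level $M^{f,\epsilon}_\alpha$ is finite by Propositions \ref{fc_limit} and \ref{c_0.osc}, and the lemma gives that $\nu_f$ is finite. For a putative infinite antichain $A \subset \bigcup_\alpha M^{f,\epsilon}_\alpha$ I would construct an injection from $A$ into a finite set, yielding the desired contradiction. The idea is to assign each $a \in A$ a ``maximal extension'' $a^*$ obtained by walking upward from $a$ in $\bigcup_\alpha M^{f,\epsilon}_\alpha$: at successor levels extend by any ascendant in $M^{f,\epsilon}$, and at limit levels pass to the inverse-limit thread (which has oscillation at least the liminf of previous oscillations). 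By (a) the walk cannot escape to the top of the system, and by the finiteness of $\nu_f$ it must terminate at some successor level $\beta \in \nu_f$, placing $a^*$ in the finite set $\bigcup_{\beta \in \nu_f} M^{f,\epsilon}_\beta$. Distinct elements of $A$ produce distinct extensions, for two with a common $a^*$ would both lie on the branch of $a^*$ and hence be comparable. The delicate technical point is the behavior at a limit level where the thread's oscillation may drop to exactly $\epsilon$ and leave $M^{f,\epsilon}$; to handle this I would replace $\epsilon$ by a slightly smaller threshold $\epsilon'$ and rerun the lemma at $\epsilon'$, so that threads starting with oscillation $> \epsilon$ stay safely inside $M^{f,\epsilon'}$ at every limit. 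The rigidity supplied by Lemma \ref{finite_branching} — at the original threshold and, via the same argument, at $\epsilon'$ — is exactly what makes this assignment land in a finite set.
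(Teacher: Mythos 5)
Your treatment of the necessity direction, of the continuity of $\Phi(f)$ on $\Upsilon(S)$, and of condition (a) along unbounded chains coincides with the paper's argument, and your antichain argument has the same skeleton as the paper's: send each $a \in A$ to a maximal point $a^*$ of the high-oscillation set above $a$, observe that the level of $a^*$ must then lie in $\nu_f$, and conclude that the injective map $a \mapsto a^*$ lands in the finite set $\bigcup_{\beta \in \nu_f} M^{f,\epsilon}_\beta$. (The paper phrases the last step as: infinitely many of the maximal points share a level, so some $M^{f,\epsilon}_{\alpha_0}$ is infinite, contradicting the full closedness of $\pi_{\alpha_0}$ --- the same count.) You have also correctly isolated the one delicate point, namely that the upward walk can die at a limit level where the thread's oscillation drops to exactly $\epsilon$; the paper's own proof silently takes the existence of these maximal elements for granted.

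The gap is in your repair of that point. Replacing $\epsilon$ by $\epsilon' < \epsilon$ does not close it: once the walk has passed one limit level, its points are only guaranteed to satisfy $\Phi(f) > \epsilon'$, so at the next limit level the thread's value can again drop to exactly $\epsilon'$ and the walk dies at a limit level, which carries no information about $\nu_f^{(\epsilon')}$; no single finite shrinkage of the threshold rules out repeated occurrences of this. Moreover, even when the walk does stop at a successor level $\beta$ because no ascendant has $\Phi(f) > \epsilon$ (or $\geq \epsilon$), an ascendant may still have $\Phi(f) \in (\epsilon', \epsilon)$, so $\beta \in \nu_f^{(\epsilon')}$ does not follow --- the termination condition of the walk no longer matches the definition of $\nu_f$ at the new threshold. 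The clean repair is to run the walk (equivalently, to apply Zorn's lemma) inside the non-strict sets $\tilde{M}_\alpha := \{x \in X_\alpha : \Phi(f)(x) \geq \epsilon\}$: since the oscillation over a decreasing intersection of fibres is the infimum of the oscillations, the condition $\Phi(f) \geq \epsilon$ survives passage to limit levels, so maximal elements exist and the walk can only stop at a successor step, whose level then lies in $\tilde{\nu}_f := \{\alpha : \pi^{\alpha+1}_\alpha(\tilde{M}_{\alpha+1}) \neq \tilde{M}_\alpha\}$. This requires Lemma \ref{finite_branching} for the sets $\tilde{M}_\alpha$ (which are finite, being contained in $M^{f,\epsilon/2}_\alpha$); its proof goes through essentially verbatim, as the set $N_{\alpha_0}$ there is already defined with a non-strict inequality.
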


\begin{proof}
	Necessity follows directly from Proposition \ref{c_0.osc} and Proposition \ref{finite_antichanains}. Indeed, if for some $\alpha \in \gamma$ $\pi_\alpha$ is not fully closed, then there is a function $f \in C(X)$ and $\epsilon >0$ such that $M^{f, \epsilon}_\alpha$ is infinite. However, the points of this set form an antichain in $\Upsilon(S)$ on which $\Phi(f) > \epsilon$.
	
	Now let $f \in C(X)$ and $x \in X_{\alpha_0}$ for some limit ordinal $\alpha_0 \in \gamma$. As $\pi_{\alpha_0}^{-1} (x) = \bigcap_{\alpha \in \alpha_0} \pi_\alpha^{-1}(\pi^{\alpha_0}_\alpha(x))$, we have that $\lim \Phi(f)(x_\alpha) = \Phi(f)(x)$, whenever $\lim x_\alpha = x$. Assume that $S$ is F-decomposable. Having in mind the remark after Proposition \ref{finite_antichanains}, we need to show two things. First, if $(t_\alpha)_{\alpha \in \gamma}$ is an unbounded increasing sequence (this case is relevant only for $\gamma$ a limit ordinal), $f(t_\alpha) \to 0$ by the proof of Lemma \ref{finite_branching}. Now suppose that there is an infinite antichain $A \in \Upsilon$ such that $\Phi(f)(t) > \epsilon$ for all $t \in A$. For every $t \in A$ take a maximal element of the set $\{x \in [t, \infty): \Phi(f)(x) > \epsilon\}$. Denote this element $x_t$ and its level $\alpha_t$, i.e. $x_t \in X_{\alpha_t}$. Then $x_t \in M^{f, \epsilon}_{\alpha_t}$ and from maximality we have that $(\pi^{\alpha_t+1}_{\alpha_t})^{-1}(x_t) \cap M^{f, \epsilon}_{\alpha_t + 1} = \emptyset$. Consequently, $\alpha_t \in \nu_f$. But the last set is finite by Lemma \ref{finite_branching}. Note that because $A$ is an antichain, we necessarily have $x_t \neq x_s$ whenever $s$ and $t$ are two distinct elements of $A$. As $A$ is infinite, we can find an ordinal $\alpha_0 \in \gamma$ such that $\alpha_t = \alpha_0$ for every $t$ in some infinite $B \subset A$. However, $\{x_t: t \in B\} \subset M^{f, \epsilon}_{\alpha_0}$ which contradicts the full closedness of $\pi_{\alpha_0}$ and thus the F-decomposability of the system.
\end{proof}

\section{$\mathrm{F}_d$ compacta and LUR renormability} \label{sect_proof}

This section is devoted to the proof of our main result, which is the following:

\begin{theorem}\label{main_thm}
	Let $X = \lim\limits_{\leftarrow} \{X_\alpha, \pi_\alpha^\beta, \alpha, \beta \in \gamma\}$ be an $\mathrm{F}_d$-compact of spectral height $\gamma < \omega_1$. Then $C(X)$ admits an equivalent $\tau_p$-lsc LUR norm.
\end{theorem}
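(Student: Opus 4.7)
The strategy is to verify the Molt\'o--Orihuela--Troyanski criterion (Theorem \ref{LUR_char}) for the norming subspace $F := \Span\{\delta_x : x \in X\}$, which generates $\tau_p$. Fix $\epsilon > 0$; the target is a countable partition $C(X) = \bigcup_{n \in \N} E_n$ together with, for each $f \in E_n$, a $\tau_p$-open halfspace $H_f \ni f$ such that $\|\cdot\|$-$\diam(E_n \cap H_f) < \epsilon$. I attach to each $f \in C(X)$ a finite combinatorial-numerical \emph{profile} at scale $\eta := \epsilon/C$ (with $C > 0$ a suitable absolute constant), and let the classes $E_n$ correspond to rational profile equivalences.

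The profile is extracted from the ``critical tree''
\begin{equation*}
	T_f := \bigcup_{\alpha < \gamma} \bigl\{ (\alpha, x) : x \in M^{f, \eta}_\alpha \bigr\} \subset \Upsilon(S).
\end{equation*}
By Proposition \ref{fc_limit} combined with Proposition \ref{c_0.osc}, each level $M^{f,\eta}_\alpha$ is finite; by Corollary \ref{charact_tree} together with Proposition \ref{finite_antichanains} the tree $T_f$ contains no infinite antichain, so it has only finitely many maximal elements; and by Lemma \ref{finite_branching} the branching set $\nu_f$ is finite. The profile then records: the number $k$ and levels $\alpha_1 < \dots < \alpha_k < \gamma$ of maximal elements of $T_f$, the finite list of ordinals in $\nu_f$, the cardinalities $|M^{f,\eta}_{\alpha_j}|$, and rational approximations (at resolution $\delta \ll \eta$) to all fiber oscillations of $f$ at nodes of $T_f$ and to $\|f\|$. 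Since $\gamma < \omega_1$, this data ranges over a countable set.

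Within a class $E_n$, for $f \in E_n$ choose at each maximal node $x_j$ of $T_f$ a witness pair $a_j, b_j \in \pi_{\alpha_j}^{-1}(x_j)$ with $f(a_j) - f(b_j) > \eta - \delta$, and define the $\tau_p$-open halfspace
\begin{equation*}
	H_f := \Bigl\{ g \in C(X) : \sum_{j = 1}^{k} \bigl( g(a_j) - g(b_j) \bigr) > \sum_{j = 1}^{k} \bigl( f(a_j) - f(b_j) \bigr) - \delta \Bigr\}.
\end{equation*}
For any $g \in E_n \cap H_f$, the upper bound on each $g(a_j) - g(b_j)$ provided by matching the rational oscillation data in the profile upgrades the halfspace inequality to per-witness bounds $g(a_j) - g(b_j) > \eta - O(\delta)$; together with the profile's cardinality constraints, this forces the maximal nodes of $T_g$ to agree with those of $T_f$ at every level $\alpha_j$. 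An iterated application of Proposition \ref{metr_proj}, which identifies $\dist(\cdot, C(X_\alpha))$ with half the fiberwise oscillation supremum, then descends this combinatorial matching through $T_f$ and, using that outside the shared finite skeleton both $f$ and $g$ have oscillation at most $\eta$, yields $\|f - g\| < \epsilon$.

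The main obstacle is closing this norm estimate at limit ordinals in the descent. At a limit $\alpha \notin \nu_f$ the proof of Lemma \ref{finite_branching} produces a $\beta_0 < \alpha$ past which oscillations of $f$ are uniformly bounded by $\eta$ on $[\beta_0,\alpha)$, and the corresponding uniform bound for $g$ has to be derived from the rational profile rather than from continuity of $g$ along the limit. The cleanest route I foresee is to invoke the construction of Section \ref{constr} with $N_\alpha$ equal to the projection of $T_f$ to level $\alpha$: by Proposition \ref{constr_well-def} this produces an $\mathrm{F}_d$-compact $K_f$ with only finitely many nontrivial fibers at each level, hence metrizable by Proposition \ref{Fed_metr}. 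Then $f$ factors through $C(K_f)$ modulo a uniform error of size $O(\eta)$, the norm comparison on $K_f$ is effectively finite-combinatorial and governed by the profile, and $H_f$ lifts back cleanly to $C(X)$. Lemma \ref{finite_branching} enters precisely at this point, as the device that keeps $K_f$ ``finitely branched'' across countable spectral height and hence genuinely extends the finite-$\sht$ methods of \cite{manev24}.
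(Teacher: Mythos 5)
Your overall architecture is the same as the paper's: the Molt\`o--Orihuela--Troyanski criterion, a countable decomposition by the finite data supplied by Lemma \ref{finite_branching} (the branching set $\nu_f$, cardinalities of the critical sets, rational approximations of the critical oscillations), a halfspace given by a sum of witness-pair functionals $g \mapsto g(a_j)-g(b_j)$ with a small margin, and the quotient $K_f$ from Section \ref{constr}, which is metrizable by Proposition \ref{Fed_metr}. Up to the point where you force the critical sets of $g$ to coincide with those of $f$, your argument tracks the paper's.

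There is, however, a genuine gap in the endgame. You assert that once the critical trees of $f$ and $g$ are matched, ``the norm comparison on $K_f$ is effectively finite-combinatorial and governed by the profile.'' It is not. Matching $T_g$ with $T_f$ gives $K_g = K_f =: K$ and, via Proposition \ref{metr_proj}, $\dist(f, C(K)) < \epsilon$ and $\dist(g, C(K)) < \epsilon$; but the components of $f$ and $g$ in $C(K)$ are essentially arbitrary elements of an infinite-dimensional Banach space, and no finite list of rational oscillation data (nor $\|f\|$) constrains their mutual distance --- two functions can have identical profiles and satisfy $\|f - g\| \approx 2\|f\|$. The missing ingredient is precisely where metrizability of $K_f$ is cashed in: $C(K_f)$ is \emph{separable}, so one adds a further countable decomposition indexed by a countable dense subset $\{h_p\}_{p \in \N} \subset C(K_f)$, placing $f$ in the class $p$ when $\|f - h_p\| < \epsilon$. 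Then $g$ in the same class with $K_g = K_f$ lies within $\epsilon$ of the \emph{same} $h_p$, and $\|f-g\| < 2\epsilon$ follows. This is the paper's last decomposition and it cannot be absorbed into the ``profile.'' A secondary issue: your profile records rational approximations to the oscillations at \emph{all} nodes of $T_f$, but $T_f$ is in general countably infinite (finitely many branches, each of length up to $\gamma$), so this datum ranges over an uncountable set; you must restrict the recorded oscillations to the finitely many points of $M^{f,\eta}_{\alpha}$ for $\alpha$ in the finite set $\nu_f$ (which, by Lemma \ref{finite_branching}, determine all the $M^{f,\eta}_\alpha$), as the paper does.
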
 

\begin{proof}
	Let $\epsilon > 0$. We will find a countable decomposition of the space $C(X)$ that satisfies the conditions of Theorem \ref{LUR_char}. We shall proceed in several steps.
	
	As for any $f \in C(K)$ we have $\pi^\alpha_\beta\left(M^{f,\epsilon}_\alpha\right) \subset M^{f,\epsilon}_\beta$, we can once again apply the construction from Section \ref{constr}. Denote the resulting compact space $K^{f, \epsilon}$. The full closedness of the bonding mappings implies that the sets $M^{f,\epsilon}_\alpha$ are finite for all $\alpha \in \gamma$. Thus the space $K^{f,\epsilon}$ is metrizable. The above assertion allows us to start with the first decomposition as follows:
	\begin{equation} \label{dec1}
		C(X) = \bigcup_{\nu \in \gamma^{<\omega}} E_\nu: \quad f \in E_\nu, \text{ if } \nu_f = \nu.
	\end{equation}
	
	In what follows we shall denote by $\lgth(\nu)$ the length of a finite sequence. For the subsequent decomposition we make use of the fact that each $M^{f, \epsilon}_{\nu_i}$ is finite:
	\begin{equation} \label{dec2}
		E_\nu = \bigcup_{k \in \N^{\lgth(\nu)}} E_{\nu,k}: \quad f \in E_{\nu,k}, \text{ if } \abs{M^{f,\epsilon}_{\nu_i}} = k_i; \quad i = 1, \dots, \lgth(\nu). 
	\end{equation}
	
	Often the construction of a countable decomposition satisfying the hypotheses of Theorem \ref{LUR_char} requires passing from a weak (pointwise) neighborhood of a given point to an open (pointwise open) halfspace. In this note we will do this with a technique similar to the one in \cite[Lemma 4.22]{motv}. In particular, we have the following:
	\begin{equation}
		\begin{aligned}\label{dec3}
			&E_{\nu, k} = \bigcup_{r \in \N} E_{\nu, k, r}: \quad f \in E_{\nu, k, r}, \text{ if }\\
			&\min\{\osc_{\pi_{\nu_i}^{-1}(x)} f: \;  i \in \{1, \dots, \lgth(\nu)\}, x \in M^{f, \epsilon}_{\nu_i}\} - \epsilon > \frac{1}{r}.
		\end{aligned}
	\end{equation}
	
	The next decomposition will be used bellow together with (\ref{dec3}) to combine a finite number of functionals into one. Essentially, it allows us to choose the functionals associated with a function $f \in C(X)$ in such a way that they almost attain their supremum on the corresponding piece of the decomposition at $f$. We will need to well-order the spaces $X_\alpha$ to ensure that what follows is well defined.
	\begin{equation}\label{dec4}
		E_{\nu, k, r} = \bigcup_{q \in \Q^{\sum k_i}} E_{\nu, k, r, q}: \quad \osc_{\pi_{\nu_i}^{-1}(x_j)} f \in \left(q_j - \frac{1}{3 r \sum_i k_i}, q_j + \frac{1}{3 r \sum_i k_i}\right),
	\end{equation}
	where $x_j$ are the points of $M^{f, \epsilon}_{\nu_i}$, ordered first by the index of their space $\nu_i$ and then by the well-ordering of $X_{\nu_i}$. The index $q$ is thus a multi index with $\sum_{i = 1}^{\lgth(\nu)} k_i$ coordinates.
	
	For the last decomposition we will use the construction of Section \ref{constr} with the sets $M^{f,\epsilon}_\alpha$. Denote the resulting compact space $K_f^\epsilon$. We can inductively show that this space is metrizable. Indeed, if we suppose some lower level space $K_{f, \alpha}^\epsilon$ is metrizable, the same will be true for $K_{f, \alpha +1}^\epsilon$ by Proposition \ref{Fed_metr} and the fact that the set of nontrivial fibers is finite. Now if $\alpha \in \gamma$ is a limit ordinal and $K_{f, \beta}^\epsilon$ are metrizable for all $\beta \in \alpha$, $K_{f, \alpha}^\epsilon$ will also be metrizable as it is homeomorphic to a subspace of a countable product of metrizable spaces. We thus have that $C(K_f^\epsilon)$ is separable and we can fix a countable dense subset $\{h_p\}_{p \in \N} \subset C(K_f^\epsilon)$. Then we set:
	\begin{equation}\label{dec5}
		E_{\nu, k, r, q} = \bigcup_{p \in \N} E_{\nu, k, r, q, p}: \quad  f \in E_{\nu, k, r, q,p}, \text{ if } f \in B_{\epsilon}(h_p).
	\end{equation}
	This is possible because $q^\epsilon_f: X \to K^\epsilon_f$ is a continuous surjection and by Proposition \ref{metr_proj} we have $\dist(f, C(K^\epsilon_f)) = \frac{1}{2} \sup\{\osc_{(q^\epsilon_f)^{-1}(x)}: x \in K^\epsilon_f\} < \epsilon$.
	
	We are now ready to select the functional giving the required halfspace. For $f \in E_{\nu,k,r,q,p}$ and $x \in M^{f, \epsilon}_{\nu_i}$ for some $i \leq \lgth(\nu)$, we choose $\overline{x}, \underline{x} \in \pi_{\nu_i}^{-1} (x)$ such that $f(\overline{x}) - f(\underline{x}) = \osc_{\pi_{\nu_i}^{-1}(x)} f$. We can then define the functional $\mu^f_x \in \left(C(X), \tau_p\right)^*$ as follows:
	\begin{equation*}
		\mu^f_x(g) := g(\overline{x}) - g(\underline{x}).
	\end{equation*}
	Finally, we define
	\begin{equation*}
		\mu^f := \sum \left\{\mu^f_x : \; x \in \bigcup\limits_{i = 1}^{\lgth(\nu)} M^{f, \epsilon}_{\nu_i} \right\}.
	\end{equation*}
	The required halfspace will then be given by:
	\begin{equation}
		H_f := \left\{g \in C(X): \; \mu^f (g) > \mu^f (f) - \frac{1}{3r}\right\}.
	\end{equation}
	
	\begin{claim}
		If $g \in E_{\nu,k,r,q,p} \cap H_f$, then $\mu^f_x(g) > \epsilon$ for all $x \in \bigcup_{i = 1}^{\lgth(\nu)} M^{f, \epsilon}_{\nu_i} $.
	\end{claim}
	
	\begin{proof}
		Observe first that if $\mu^f_x (g) > \epsilon$, for some $i \leq \lgth(\nu)$ and $x \in X_{\nu_i}$, then $x \in M^{g,\epsilon}_{\nu_i}$. As $g \in E_{\nu,k,r,q,p}$, we have that $\osc\limits_{\pi_i^{-1}(x)} g \in \left(q_j - \frac{1}{3 r \sum_i k_i}, q_j + \frac{1}{3 r \sum_i k_i}\right)$ for some $j \in \{1, \dots, \sum k_i\}$. Also, for all $j \in \{1, \dots, \sum k_i\}$, the inequality $q_j > \epsilon + \frac{1}{r} - \frac{1}{3 r \sum_i k_i}$ holds.
		
		Assume that for some subset $\{x_1, \dots x_l\} \subset \bigcup\limits_{i = 1}^{\lgth(\nu)} M^{f,\epsilon}_{\nu_i}$, we have $\mu^f_x (g) \leq \epsilon$ for $x \in \{x_1, \dots, x_l\}$. We then have:
		\begin{align*}
			&\sum \left\{\mu^f_x (g): \; x \in \left(\bigcup_{i = 1}^{\lgth(\nu)} M^{f, \epsilon}_{\nu_i}\right) \setminus \{x_1, \dots, x_l\}\right\} \\
			< \quad &\sum_{j = 1}^{\sum k_i} q_j + \left(\sum_{i = 1}^{\lgth(\nu)} k_i\right) \frac{1}{3 r \sum_{i = 1}^{\lgth(\nu)} k_i} - l\left(\min_j q_j + \frac{1}{3 r \sum_{i = 1}^{\lgth(\nu)} k_i}\right) \\
			< \quad& \mu^f (f) + 2 \left(\sum_{i = 1}^{\lgth(\nu)} k_i\right) \frac{1}{3 r \sum_{i = 1}^{\lgth(\nu)} k_i} - l\left(\epsilon  + \frac{1}{r}\right) \\
			= \quad&\mu^f (f) + \frac{2 - 3l}{3r} - l \epsilon.
		\end{align*}
		On the other hand, as $g \in H_f$, we have
		\begin{gather*}
			\sum \left\{\mu^f_x (g): \; x \in \left(\bigcup_{i = 1}^{\lgth(\nu)} M^{f, \epsilon}_i\right) \setminus \{x_1, \dots, x_l\}\right\} \\
			\geq \quad \mu^f (g) - l\epsilon \quad > \quad \mu^f (f) - \frac{1}{3r} - l\epsilon,
		\end{gather*}
		a contradiction whenever $l \geq 1$.
	\end{proof}
	
	By Lemma \ref{finite_branching} and the definition of $\nu$ the sets $M^{f, \epsilon}_\alpha$ are completely determined by the sets $M^{f, \epsilon}_{\nu_i}$. It follows that if $g \in E_{\nu,k,r,q,p} \cap H_f$, then $M^{g, \epsilon}_\alpha = M^{f, \epsilon}_\alpha$ for all $\alpha \in \gamma$ and thus $K^\epsilon_g = K^\epsilon_f$. From the decomposition (\ref{dec5}) we have that $\|f - h_p\| < \epsilon$ and $\|g - h_p\| < \epsilon$. Thus $\|f-g\| < 2 \epsilon$, which concludes the proof.
\end{proof}

Faculty of Mahtematics and Informatics, Sofia University "St. Kliment Ohridski", 5, James Bourchier blvd., 1164 Sofia, Bulgaria

Email address: \href{mailto:tmmanev@fmi.uni-sofia.bg}{tmmanev@fmi.uni-sofia.bg} 

\begin{thebibliography}{[16]}
	
	
	
	
	\normalsize
	\baselineskip=17pt
	
	
	
	\bibitem{av_kosz} A. Avil\'es, P. Koszmider, \emph{A continuous image of a Radon-Nikod\'ym compact space which is not Radon-Nikod\'ym}, Duke Math. J., Vol. 162, no. 12, 2013, 2285--2299.
	
	\bibitem{dgz}R. Deville, G. Godefroy, and V. Zizler, \emph{Smoothness and renormings in Banach spaces}, Longman Scientific Technical, Harlow, 1993.
	
	\bibitem{eng}R. Engelking, \emph{General Topology}, Warszawa, PWN, 1985.
	
	\bibitem{fed69}V. V. Fedorchuk, \emph{Strongly closed mappings}, Sov. Math., Dokl., Vol. 10, 1969, 804--806.
	
	\bibitem{fedFC}V. V. Fedorchuk, \emph{Fully closed mappings and their applications}, J. Math. Sci., Vol. 136, 2006, 4201--4292.
	
	\bibitem{gist}S. P. Gul’ko, A. V. Ivanov, M. S. Shulikina, and S. Troyanski, \emph{Locally uniformly rotund renormings of the spaces of continuous functions on Fedorchuk compacts}, Topol. Appl., Vol. 281, 2020, 1--11.
	
	\bibitem{haydon_trees} R. Haydon, \emph{Trees in renorming theory}, Proc. Lond. Math. Soc., Vol. 78, no. 3, 1999, 541--584.
	
	\bibitem{hjnr00}R. Haydon, J. Jayne, I Namioka, and C. Rogers, \emph{Continuous functions on totally ordered spaces that are compact in their order topologies}, J. Func. Anal., Vol. 178, 2000, 23--63.
	
	\bibitem{hmo-count.disc} R. Haydon, A. Molt\`o, J. Orihuela, \emph{Spaces of functions with countably many discontinuities}, Israel J. Math., Vol. 158, 2007, 19--39.
	
	\bibitem{hay-rog_scattered}R. Haydon and C. Rogers, \emph{A locally uniformly convex renorming for certain C(K)}, Mathematika, Vol. 37, 1990, 1--8.
	
	\bibitem{holmes_opt} R. Holmes, \emph{A Course on Optimization and Best Approximation}, Lecture Notes in Math., Vol. 257, Springer-Verlag, 1972.
	
	\bibitem{ivanov1984fedorchuk}A. V. Ivanov, \emph{On Fedorchuk compacta}, in: Mappings and Functors, Izd. Mosk. Univ. Moscow, 1984, 31--40 (in Russian).
	
	\bibitem{ivanov_qfc} A. V. Ivanov, \emph{Almost fully closed mappings and quasi-F-compacta}, Trans. Karelian Res. Cent. RAS 7, 2018, 25--33 (in
	Russian).
	
	\bibitem{kosz-few_op} P. Koszmider, \emph{Banach spaces of continuous functions with few operators}, Math. Ann., Vol. 330, no. 1, 2004, 151--183.
	
	\bibitem{manev24} T. Manev, \emph{Fully closed mappings and LUR renormability}, Studia Math., Vol. 278, no. 1, 2024, 67--79.
	
	\bibitem{mirna_pleb} D. Mirna, G. Plebanek, \emph{On Efimov spaces and Radon measures}, Topol. Appl., Vol. 154, no. 10, 2007, 2063--2072.
	
	\bibitem{mot97} A. Molt\'o, J. Orihuela, and S. Troyanski, \emph{Locally uniformly rotund renorming and fragmentability}, Proc. London Math. Soc., Vol. 75, 1997, 619--640.
	
	\bibitem{motv} A. Molt\'o, J. Orihuela, S. Troyanski, and M. Valdivia, \emph{A nonlinear transfer technique for renorming}, Lecture Notes in Math., Vol. 1951, Springer, 2009.
	
	\bibitem{watson} S. Watson, \emph{The construction of topological spaces, planks and resolutions}, in: Recent progress in general topology, M. Hušek, J. van Mill (Eds.), Vol. 20, North-Holland, 1992, 673–757.
	
\end{thebibliography}
\end{document}